\newcommand{\QQ}{\mathbb Q}
\newcommand{\ZZ}{\mathbb Z}
\newcommand{\Fp}{\mathbb F_p}
\newcommand{\barFp}{\overline{\mathbb F}_p}
\newcommand{\fp}{\mathfrak p}
\newcommand{\fl}{\mathfrak l}
\newcommand{\sqth}{\sqrt{-3}}
\newcommand{\GQ}{G_{\QQ}}
\newcommand{\phiE}{\hat\phi_{E,p}}
\newcommand{\brphiE}{\phi_{E,p}}
\newcommand{\rhoE}{\hat\rho_{E,p}}
\newcommand{\brhoE}{\rho_{E,p}}
\newcommand{\brhof}{\rho_{f,p}}
\newcommand{\brhoA}{\rho_{A_\beta,\pi}}
\newcommand{\bound}{137}
\DeclareMathOperator{\Gal}{Gal}
\DeclareMathOperator{\Res}{Res}
\DeclareMathOperator{\GL}{GL}
\DeclareMathOperator{\PGL}{PGL}
\DeclareMathOperator{\Norm}{Norm}
\DeclareMathOperator{\Tr}{Tr}
\DeclareMathOperator{\Hom}{Hom}
\DeclareMathOperator{\rank}{rank}
\DeclareMathOperator{\Frob}{Frob}
\newtheorem{thm}{Theorem}
\newtheorem{lem}{Lemma}[section]
\newtheorem{prop}[lem]{Proposition}
\newtheorem{cor}[lem]{Corollary}
\theoremstyle{definition}
\theoremstyle{remark}
\title{On the generalized Fermat equation $a^2+3b^6=c^n$}
\author{Angelos Koutsianas}
\address{Department of Mathematics, The University of British Columbia, 1984 Mathematics Road
Vancouver, BC, Canada}
\email{akoutsianas@math.ubc.ca}
\date{\today}
\keywords{Fermat equations, $\QQ$-curves, Galois representations}
\subjclass[2010]{Primary 11D61}
\begin{document}

\begin{abstract}
In this paper, we prove that the only primitive solutions of the equation $a^2+3b^6=c^n$ for $n\geq 3$ are $(a,b,c,n)=(\pm 47,\pm 2,\pm 7,4)$. Our proof is based on the modularity of Galois representations of $\QQ$-curves and the work of Ellenberg \cite{Ellenberg04} for big values of $n$ and a variety of techniques for small $n$.
\end{abstract}

\maketitle

\section{Introduction}

The remarkable breakthrough of Andrew Wiles about the proof of Taniyama-Shimura conjecture which leaded to the proof of Fermat's Last Theorem introduced a new and very rich area of modern number theory. A variety of techniques and ideas have been developed for solving the generalized Fermat equation of the form
\begin{equation}\label{eq:generalizedFermat}
Aa^p + Bb^q  = Cc^r.
\end{equation}
Because the literature is very rich we refer to \cite{BennettChenDahmenYazdani15} for a detailed exposition of the cases of \eqref{eq:generalizedFermat} that have been solved. In this paper we prove the following

\begin{thm}\label{thm:main}
Let $n\geq 3$ be an integer. The only primitive solution of equation
\begin{equation}\label{eq:main}
a^2 + 3b^6 = c^n
\end{equation}
is $(a,b,c,n)=(\pm 47,\pm 2,\pm 7,4)$. A solution $(a,b,c)$  is called primitive if $a,b,c$ are pairwise coprime integers and $ab\neq 0$.
\end{thm}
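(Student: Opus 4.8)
The plan is to separate the problem by the size of the exponent and treat the two ranges by entirely different means. First I would reduce to prime exponents: if $n$ has an odd prime divisor $p$, then a primitive solution with exponent $n$ yields one with exponent $p$ upon replacing $c$ by $c^{n/p}$ (pairwise coprimality is inherited), while if $n$ is a power of $2$ it is divisible by $4$. Hence it suffices to solve \eqref{eq:main} for $n=4$ and for $n=p$ an odd prime, and to check that the displayed solution genuinely occurs only at $n=4$ (indeed $7^4=2401$ is a perfect power of no higher order). The governing idea for general $p$ is to pass to $K=\QQ(\sqth)$, where
\begin{equation*}
c^p = a^2+3b^6 = (a+\sqth\,b^3)(a-\sqth\,b^3),
\end{equation*}
and to attach to a hypothetical solution a Frey elliptic curve $E/K$ built from this factorization. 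Since the two factors are interchanged by $\Gal(K/\QQ)$, the curve $E$ will be a $\QQ$-curve, isogenous to its Galois conjugate; by the theory of Ribet exploited by Ellenberg \cite{Ellenberg04} it then gives rise to a mod-$p$ representation $\brhoE\colon\GQ\to\GL_2(\barFp)$, namely a suitable twist of the representation on $E[\fp]$.

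Next I would run the modular method on $\brhoE$. The discriminant and conductor of $E$ are arranged so that the $b^6$ and $c^p$ appearing in the equation force $\brhoE$ to ramify only at a fixed finite set of primes, with controlled behaviour there; concretely one computes the Serre level $N_0$ and character of $\brhoE$ and shows they are bounded independently of the solution. After verifying that $\brhoE$ is irreducible for $p$ large, modularity of $\QQ$-curves together with Ribet's level-lowering produces a newform $f$ of level $N_0$ whose mod-$\fp$ representation is isomorphic to $\brhoE$; comparing the Hecke eigenvalues of $f$ with the traces of $\brhoE(\Frob_\ell)$ at several auxiliary primes $\ell$ then yields a contradiction for all $p$ beyond an effective bound, Ellenberg's input being precisely what makes this bound explicit. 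The main obstacle lies in this block: pinning down the $\QQ$-curve, its field of definition and isogeny character, computing the exact Serre invariants while keeping track of the quadratic twist coming from $K/\QQ$ and of any inner twists, and establishing residual irreducibility (and the absence of relevant complex-multiplication forms) uniformly in the solution.

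Finally I would dispatch the remaining exponents by hand. For $n=3$ a descent over $K=\QQ(\sqth)$ writes $a+\sqth\,b^3$ as a unit times a power of the ramified prime times a perfect cube; expanding and separating the rational and $\sqth$-parts produces a small number of cubic Thue equations with no solutions having $b\neq0$ (equivalently one may pass to the Mordell curve $Y^2=X^3-3$ via $(X,Y)=(c/b^2,a/b^3)$). For $n=4$ the factorization $(c^2-a)(c^2+a)=3b^6$, together with the coprimality of $a$ and $c$, bounds the common factor of the two sides and reduces the problem to a short list of systems in which each factor is a small constant times a sixth power; these recombine into lower-signature generalized Fermat equations solvable by descent and Thue equations, and it is here that $(\pm47,\pm2,\pm7,4)$ survives. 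For the finitely many primes below Ellenberg's bound I would carry out the explicit newform computations of the modular method above, eliminating the surviving forms by trace comparisons and, where these do not suffice, by additional local information such as the image of inertia, Kraus-type congruences, or a multi-Frey refinement. Assembling the large-$p$ contradiction with these small-exponent computations yields the theorem.
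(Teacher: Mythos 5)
Your skeleton is the paper's: the same Frey $\QQ$-curve over $\QQ(\sqth)$, modularity of $\QQ$-curves plus Ribet level-lowering landing in $S_2(\Gamma_0(972))$, Ellenberg's input for large $p$, and ad hoc descents for small exponents. But your execution plan has a genuine gap at the central step. You claim that comparing Hecke eigenvalues of $f$ with traces of $\brhoE(\Frob_\ell)$ at auxiliary primes yields a contradiction for all large $p$, ``Ellenberg's input being precisely what makes this bound explicit,'' and you list establishing ``the absence of relevant complex-multiplication forms'' as something to verify. Neither is tenable: of the seven newforms at level $972$, four are rational with CM by $\QQ(\sqth)$ itself, and no finite list of trace congruences can eliminate a form with CM by the field of definition of the $\QQ$-curve, since a congruence $\brhoE\simeq\brhof$ with such an $f$ only forces the projective image of $\brhoE$ into the normalizer of a Cartan subgroup of $\PGL_2(\Fp)$ --- a constraint compatible with every local trace datum, uniformly in $p$. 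Ellenberg's method is not an effectivity supplement to trace comparison but a different argument entirely (Propositions \ref{prop:not_in_split_normalizer} and \ref{prop:not_in_nonsplit_normalizer}): image in such a normalizer forces potentially good reduction away from $6$, contradicting the multiplicative reduction at primes dividing $c$ guaranteed by Lemma \ref{lem:2_3_nmid_c}; the nonsplit bound rests on nonvanishing of $L(f\otimes\chi_{-3},1)$ for $p$-new forms of level $p^2$, known for $p\geq\bound$ by Dieulefait--Urroz and checked computationally in the paper for $11\leq p<\bound$, with $p=13$ in the split case needing a separate argument. In the paper, trace comparisons (Proposition \ref{prop:congruence_criterion}) dispose only of the three irrational forms, and even then only prove $p\leq 7$.

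You also underestimate the small exponents. The modular method cannot finish $p=5,7$ (the congruences are consistent with those exponents), so these require the bespoke arguments of Section \ref{sec:small_exponents}: the parametrization of $x^2+3y^2=z^p$ over $\ZZ[\omega]$, rank-zero elliptic curves over $\QQ(\sqth)$, a unit-and-cube descent over $\QQ(\sqrt{5})$ for $p=5$, and elliptic-curve Chabauty over $\QQ(\sqth)$ on the cubics \eqref{eq:elliptc_curve_n7_c1} for $p=7$. For $n=4$, your factorization $(c^2-a)(c^2+a)=3b^6$ does not terminate in Thue equations: the surviving case is precisely $c^2=u^6+48v^6$, i.e.\ the genus-$2$ curve $C:Y^2=X^6+48$, which carries the rational points $(\pm1,\pm7)$ giving the solution $(\pm 47,\pm 2,\pm 7,4)$, so no elimination-style or purely local argument can succeed --- one must provably determine all of $C(\QQ)$. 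Its Jacobian has rank $2$, so classical Chabauty fails, and the paper resorts to the Flynn--Wetherell bielliptic covering trick combined with elliptic-curve Chabauty over the cubic field $\QQ(a)$ with $a^3+48=0$ (Proposition \ref{prop:covering_chabauty}). In short, your large-$p$ mechanism must be replaced by the geometric CM-elimination of Section \ref{sec:eliminate_CM}, and your small-exponent cleanup needs Chabauty-type point determinations rather than descent to Thue equations.
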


For the proof of Theorem \ref{thm:main} we use the recent proof of modularity of $\QQ$-curves as a result of the proof of Serre's modularity conjecture \cite{KhareWintenberger09a,KhareWintenberger09b,Kisin09} and the study of the arithmetic of $\QQ$-curves by many mathematicians \cite{Quer00,Ellenberg04,Ribet04}.  Even though we are not able to give a detailed proof it seems that for the equation $a^2 + db^6 = c^n$ and fix $d>0$ we are able to attach a Frey $\QQ$-curve only for the cases $d=1$ \cite{BennettChen12} and $3$, which makes these values special.

The paper is organised as follows. In Section \ref{sec:preliminaries} we recall the terminology and theory of $\QQ$-curves. In Section \ref{sec:Frey_curve} we introduce a Frey curve which we prove it is a $\QQ$-curve and we study its arithmetic properties. In Section \ref{sec:proof} we prove Theorem \ref{thm:main} when $n\geq 11$ is a prime using Ellenberg's analytic method \cite{Ellenberg04} which we explain in Section \ref{sec:eliminate_CM}. In Section \ref{sec:small_exponents} we prove Theorem \ref{thm:main} for the small exponents $n=3,4,5,7$. Finally, in Appendix \ref{sec:appendix} we compute the rational points of the curve $Y^2=X^6+48$ which we need for the case $n=4$.

The computations of the paper were performed in \textbf{Magma} \cite{Magma} and the programs can be found in author's homepage \url{https://sites.google.com/site/angeloskoutsianas/}.

\section{Preliminaries}\label{sec:preliminaries}

In this section we recall the main definitions of the $\QQ$-curves and their attached representations; we recommend \cite{BennettChen12}, \cite{EllenbergSkinner01}, \cite{Quer00} and \cite{Ribet04} for a more detailed exposition. 

Let $K$ be a number field and $E/K$ be an elliptic curve without CM such that for every $\sigma\in G_\QQ$ there exists an isogeny $\mu_E(\sigma):{}^{\sigma}E\longmapsto E$. Then $E$ is called a \textit{$\QQ$-curve defined over $K$}. We make a choice of the isogenies above such that $\mu_E$ is locally constant.

Let
\begin{equation}
c_E(\sigma,\tau) = \mu_E(\sigma){}^\sigma\mu(\tau)\mu(\sigma\tau)^{-1},\in\left(\Hom(E,E)\otimes_\ZZ\QQ\right)^*=\QQ^*
\end{equation}
where $\mu_E^{-1}:=(1/\deg\mu_E)\mu_E^\vee$ and $\mu_E^\vee$ is the dual of $\mu_E$. Thus $c_E$ determines a class in $H^{2}(G_\QQ,\QQ^*)$ which depends only on the $\overline\QQ$-isogeny class of $E$. Tate has showed that $H^{2}(G_\QQ,\QQ^*)$ is trivial when $G_\QQ$ acts trivially on $\QQ^*$. So, there exists a continuous map $\beta:G_\QQ\rightarrow\QQ^*$ such that
\begin{equation}
c_E(\sigma,\tau) = \beta(\sigma)\beta(\tau)\beta(\sigma\tau)^{-1}
\end{equation}
The map $\beta$ is called a \textit{splitting map} of $c_E$.

We define an action of $G_\QQ$ on $\overline\QQ_p\otimes_{\ZZ_p}T_p E$ given by
\begin{equation}
\rhoE(\sigma)(1\otimes x)= \beta(\sigma)^{-1}\otimes\mu(\sigma)(\sigma(x))
\end{equation}
From the definition of $\rhoE$ we have that $\mathbb P\rhoE\mid_{G_K}\simeq \mathbb P\phiE$ where 
\begin{equation}\label{eq:phi_rep}
\phiE:\Gal(\bar K/K)\rightarrow \GL_2(\ZZ_p)
\end{equation}
is the usual Galois representation attached to the $p$-adic Tate module of $E$ (see \cite[Proposition 2.3]{EllenbergSkinner01}). Given a splitting map $\beta$, Ribets \cite{Ribet04} attaches an abelian variety $A_\beta$ over $\QQ$ of $\GL_2$-type such that $E$ is a simple factor over $\overline\QQ$.

From the definition of $\rhoE$ we understand that the representation depends on $\beta$. Let $M_\beta$ be the field generated by the values of $\beta$. We want to make a choice of $\beta$ such that it factors over a number field of low degree and $c_E(\sigma,\tau) = \beta(\sigma)\beta(\tau)\beta(\sigma\tau)^{-1}$ as elements in $H^{2}(G_\QQ,\overline\QQ^*)$. Then we choose a twist $E_\beta/K_\beta$ such that $c_{E_\beta}(\sigma,\tau)=\beta(\sigma)\beta(\tau)\beta(\sigma\tau)^{-1}$ as cocycles and let $K_\beta$ be the field over $\beta$ factors which is called the \textit{splitting field of $\beta$}. In this case, the abelian variety $A_\beta$ is a quotient of $\Res_{K_\beta/\QQ}E_\beta$ over $\QQ$. The endomorphism algebra of $A_\beta$ is equal to $M_\beta$ and the representation on the $\pi^n$-torsion points of $A_\beta$ coincides with the representation $\rhoE$ above, where $\pi$ is a prime ideal in $M_\beta$ above $p$.

Finally, we define the $\epsilon:G_\QQ\rightarrow\overline\QQ^*$ given by
\begin{equation}
\epsilon(\sigma)=\frac{\beta(\sigma)^2}{\deg\mu(\sigma)}
\end{equation}
Then, $\epsilon$ is a character such that
\begin{equation}
\det(\rhoE)=\epsilon^{-1}\cdot \chi_p
\end{equation}
where $\chi_p$ is the the $p$-th cyclotomic character. We can attach a residual representation associate to $\rhoE$ (see \cite[p. 107]{EllenbergSkinner01})
\begin{equation}
\brhoE:\Gal(\bar\QQ/\QQ)\rightarrow\barFp^*\GL_2(\Fp).
\end{equation}
Similarly, we denote by $\brphiE$ the residual representation associate to $\phiE$.

\section{Frey $\QQ$-curve attached to $a^2 + 3b^6 = c^p$}\label{sec:Frey_curve}

In this section we attach a Frey $\QQ$-curve over $K = \QQ(\sqth)$ to a primitive solution $(a,b,c)$ of \eqref{eq:main}. Let $n=p$ be an odd prime. We define


\begin{equation}\label{eq:Q_curve}
E_{a,b}: Y^2 = X^3 -9\sqth b(4a - 5\sqth b^3) X + 18(2a^2 - 14\sqth ab^3 - 33 b^6)
\end{equation}
When it is not confusing we use the notation $E$ instead of $E_{a,b}$. The invariants of $E$ are given by
\begin{align}
j(E) & = 2^4\cdot 3^3\cdot\sqth\cdot b^3\cdot\frac{(4a-5\sqth b^3)^3}{(a+\sqth b^3)^3\cdot (a-\sqth b^3)},\label{eq:jinvariant} \\
\Delta(E) & = -2^8\cdot 3^7\cdot (a - \sqth b^3)\cdot(a + \sqth b)^3,\label{eq:Delta}\\
c_4(E) & = 2^4 \cdot 3^3\cdot\sqth\cdot b\cdot(4a-5\sqth b^3),\label{eq:c4}\\
c_6(E) & = -2^6\cdot 3^5\cdot (2a^2-14\sqth b^3a - 33 b^6).
\end{align}

\begin{lem}\label{lem:rational_j}
Let $a/b^3\in\mathbb P^1(\QQ)$. Then the $j$-invariant of $E$ lies in $\QQ$ only when
\begin{itemize}
\item $a/b^3=0$ and $j=54000$, or
\item $a/b^3=\infty$ and $j=0$.
\end{itemize}
\end{lem}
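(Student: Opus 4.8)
The plan is to parametrize by $t=a/b^3\in\mathbb P^1(\QQ)$. Cancelling the powers of $b$ in \eqref{eq:jinvariant}, the $j$-invariant becomes a function of $t$ alone,
\[
j(E)=2^4\cdot 3^3\cdot\sqth\cdot\frac{(4t-5\sqth)^3}{(t+\sqth)^3(t-\sqth)},
\]
which manifestly lies in $K=\QQ(\sqth)$. Since $[K:\QQ]=2$, I would use the basic principle that $j(E)\in\QQ$ if and only if $\sigma(j(E))=j(E)$, where $\sigma$ is the nontrivial element of $\Gal(K/\QQ)$; it fixes $t$ and sends $\sqth\mapsto-\sqth$. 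This turns the rationality question into a single algebraic identity in $t$.

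First I would treat the two boundary points of $\mathbb P^1(\QQ)$ by hand. Substituting $t=0$ (that is, $a=0$) gives $j=54000$, while $t=\infty$ (that is, $b=0$) annihilates the $b^3$ factor in \eqref{eq:jinvariant} and gives $j=0$; both are rational and account for the two cases in the statement. It then remains to show that no finite $t\in\QQ^{*}$ yields a rational $j$. For such $t$ the denominator $(t+\sqth)^3(t-\sqth)$ is nonzero, because $\pm\sqth\notin\QQ$, so the displayed expression is well defined and the conjugation criterion applies.

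Writing out $\sigma(j(E))=j(E)$, clearing denominators, and cancelling the nonzero factor $2^4\cdot 3^3\cdot\sqth$, reduces the equality to
\[
(4t-5\sqth)^3(t-\sqth)^2=-(4t+5\sqth)^3(t+\sqth)^2,
\]
that is, to the vanishing of the $\QQ$-rational component of $(4t-5\sqth)^3(t-\sqth)^2$ in the basis $\{1,\sqth\}$. Expanding using $\sqth^2=-3$ and collecting the coefficient of $1$ produces the polynomial equation $64t^5-2532t^3+4950t=0$, which factors as $t\,(64t^4-2532t^2+4950)=0$.

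The crux is then to check that the quartic factor has no rational root, since a genuine second rational value of $j$ would have to come from it. Setting $u=t^2$, the quadratic $64u^2-2532u+4950$ has discriminant $2268^2$, hence the rational roots $u=75/2$ and $u=33/16$; neither is a square in $\QQ$ (equivalently, $150$ and $33$ are not perfect squares), so $t^2=u$ has no rational solution. Consequently $t=0$ is the only finite rational value with $j(E)\in\QQ$, which together with the boundary case $t=\infty$ completes the argument. The one unavoidable computation is the expansion yielding the quintic; the heart of the matter is the elementary arithmetic fact that the two candidate values of $t^2$ are not rational squares.
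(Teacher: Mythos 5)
Your argument is correct and takes essentially the same route as the paper: exploiting that $\QQ(\sqth)/\QQ$ is quadratic to turn rationality of $j$ into polynomial conditions on $t=a/b^3$, you arrive at exactly the paper's quartic (your $64t^4-2532t^2+4950$ is twice its $32A^4-1266A^2+2475$), with your conjugation criterion $\sigma(j)=j$ being just a repackaging of the paper's ``take real and imaginary parts'' step. The only difference is cosmetic, and in your favor: where the paper says the quartic ``easily'' has no rational solutions, you make the check explicit (discriminant $2268^2$, roots $t^2=75/2$ and $t^2=33/16$, neither a rational square).
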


\begin{proof}
From \eqref{eq:jinvariant} and for $a/b^3=\infty$ we have that $j=0$. Let assume that $a/b^3\neq\infty$. After cleaning denominators of \eqref{eq:jinvariant} and taking real and imaginary parts using the restriction that $j,a/b^3\in\QQ$ we end up with
\begin{align*}
-A^4j^\prime + 720A^2 + 9j^\prime - 1125 & =0\\
(-A^2j^\prime + 32A^2 - 3j^\prime - 450)A & = 0 
\end{align*}
where $j^\prime = j/432$ and $A = a/b^3$. From the second equation we have that either $A = 0$ or $j^\prime=\frac{32A^2 - 450}{A^2 + 3}$. For $A= 0$ we have the first case of the lemma. Replacing $j^\prime$ to the first equation above we end up with 
\begin{equation}
-32A^4 + 1266A^2 - 2475 = 0
\end{equation}
which we can easily check that does not have any solution over $\QQ$.
\end{proof}

\begin{lem}\label{lem:cm_j}
The curve $E$ does not have complex multiplication unless
\begin{itemize}
\item $a/b^3=0$, $j=54000$ and $d(\mathcal O) = -12$ or
\item $a/b^3=\infty$, $j=0$ and $d(\mathcal O) = -3$.
\end{itemize}
\end{lem}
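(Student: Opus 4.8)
The plan is to reduce the whole statement to the arithmetic of CM $j$-invariants together with Lemma \ref{lem:rational_j}. Recall that an elliptic curve over $\overline\QQ$ has complex multiplication by an order $\mathcal O$ in an imaginary quadratic field precisely when its $j$-invariant is a root of the Hilbert class polynomial $H_{d(\mathcal O)}$, and that this polynomial is the minimal polynomial of such a $j$-invariant over $\QQ$; in particular $[\QQ(j):\QQ]=h(\mathcal O)$, the class number of $\mathcal O$. Since $E$ is defined over $K=\QQ(\sqth)$, its $j$-invariant lies in $K$, so if $E$ has CM by $\mathcal O$ then $[\QQ(j(E)):\QQ]\leq[K:\QQ]=2$, which forces $h(\mathcal O)\in\{1,2\}$.

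First I would dispose of the case $h(\mathcal O)=2$. The key observation is that a CM $j$-invariant of class number two generates a \emph{real} quadratic field. Indeed, the principal class of $\mathcal O$ is represented by a lattice $\ZZ+\ZZ\tau_0$ with $\mathrm{Re}(\tau_0)\in\{0,\pm\tfrac12\}$; since the $q$-expansion of $j$ has rational coefficients one gets $\overline{j(\tau_0)}=j(\tau_0)$, so $j(\tau_0)\in\mathbb R$, and as $H_{d(\mathcal O)}$ is a quadratic polynomial with rational coefficients possessing one real root, both of its roots are real. Hence $\QQ(j(E))$ would be a real quadratic field contained in $K=\QQ(\sqth)$; but the only subfields of $K$ are $\QQ$ and $K$ itself, and $K$ is imaginary, so this is impossible unless $j(E)\in\QQ$, contradicting $h(\mathcal O)=2$. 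Therefore $h(\mathcal O)=1$ and $j(E)\in\QQ$.

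Now I would invoke Lemma \ref{lem:rational_j}: the only $a/b^3\in\mathbb P^1(\QQ)$ for which $j(E)\in\QQ$ are $a/b^3=0$, giving $j=54000$, and $a/b^3=\infty$, giving $j=0$. It then remains to read off the CM orders: $j=0$ is the $j$-invariant of the curve with CM by $\ZZ[\zeta_3]$, so $d(\mathcal O)=-3$, while $j=54000$ is the $j$-invariant of the curve with CM by $\ZZ[\sqth]$, so $d(\mathcal O)=-12$. These are exactly the two cases in the statement, which finishes the argument.

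I expect the only genuinely delicate point to be the reality claim used to eliminate class number two; everything else is bookkeeping once the class number is bounded by $[K:\QQ]$. If one prefers to avoid that claim, an equally valid route is to note that there are only finitely many imaginary quadratic orders with $h(\mathcal O)\leq 2$, to take the tabulated list of their $j$-invariants, and to check directly that none of the non-rational ones lies in $K=\QQ(\sqth)$, after which Lemma \ref{lem:rational_j} concludes in the same way. Either way the skeleton is: bound $h(\mathcal O)$ by $[K:\QQ]=2$, rule out $h(\mathcal O)=2$, and quote the rational case already settled.
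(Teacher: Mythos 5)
Your proposal is correct and follows essentially the same route as the paper: the paper's proof simply asserts that a CM $j$-invariant is real, concludes $j(E)\in\QQ$ since $j(E)\in\QQ(\sqth)$, and then quotes the known list of rational CM $j$-invariants together with Lemma \ref{lem:rational_j}. Your class-number argument (bounding $h(\mathcal O)\leq 2$ and showing both roots of a quadratic Hilbert class polynomial are real) is just a careful justification of the reality step the paper states without proof --- a worthwhile precision, since singular moduli need not be real in general, but not a different method.
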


\begin{proof}
Let assume that $E$ has complex multiplication. Then from the theory of complex multiplication we know that the $j(E)$ is a real algebraic number. Because $j(E)\in\QQ(\sqth)$ we conclude that $j(E)\in\QQ$. Because the list of $j$-invariants of elliptic curves with complex multiplication with $j\in\QQ$ it is known (see \cite{Cox89}) we have the result.
\end{proof}

\begin{lem}\label{lem:2_3_nmid_c}
Let $(a,b,c)$ be a primitive solution of \eqref{eq:main}, then $c$ is divisible by a prime different from $2$ and $3$. 
\end{lem}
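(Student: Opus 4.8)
The plan is to prove something slightly stronger than stated, namely that $\gcd(c,6)=1$; since $ab\neq 0$ forces $c^n=a^2+3b^6\geq 4$ and hence $|c|\geq 2$, any such $c$ must then possess a prime divisor, and that divisor is automatically different from $2$ and $3$. So the whole task reduces to ruling out $2\mid c$ and $3\mid c$ separately, using only the primitivity hypothesis and the assumption $n\geq 3$.

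First I would dispose of the prime $2$ by a congruence argument modulo $8$. Pairwise coprimality of $a,b,c$ means $a$ and $b$ cannot both be even, so there are three parity patterns to examine. The decisive one is $a,b$ both odd: then $a^2\equiv 1$ and $b^6\equiv 1\pmod 8$, so $a^2+3b^6\equiv 4\pmod 8$, whereas for $n\geq 3$ an $n$-th power is either odd (when $c$ is odd) or divisible by $8$ (when $c$ is even, since then $v_2(c^n)=n\,v_2(c)\geq 3$), and in neither case congruent to $4$ modulo $8$; this contradiction eliminates the both-odd case. In the two surviving patterns exactly one of $a,b$ is even, and a direct parity count shows that $a^2+3b^6$ is odd, forcing $c$ to be odd. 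Hence $2\nmid c$ in every case.

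Next I would rule out $3\mid c$ by counting $3$-adic valuations. If $3\mid c$ then reducing \eqref{eq:main} modulo $3$ gives $3\mid a^2$, hence $3\mid a$, and then $\gcd(a,b)=1$ forces $3\nmid b$. Consequently $v_3(a^2)\geq 2$ while $v_3(3b^6)=1$, so $v_3(a^2+3b^6)=1$; on the other hand $v_3(c^n)=n\,v_3(c)\geq n\geq 3$. These are incompatible, so $3\nmid c$.

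Combining the two steps yields $\gcd(c,6)=1$, which together with $|c|\geq 2$ gives the claim. The only step that is more than routine is the elimination of the both-odd parity pattern: it is precisely here that the hypothesis $n\geq 3$ is indispensable, since for $n=2$ the congruence $c^2\equiv 4\pmod 8$ is perfectly consistent with $c$ even. Everywhere else the argument is merely a bookkeeping of valuations.
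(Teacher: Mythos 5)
Your proof is correct and follows essentially the same route as the paper: the mod $8$ congruence $a^2+3b^6\not\equiv 0\pmod 8$ to exclude $2\mid c$, and the $3$-adic valuation count ($3\mid c\Rightarrow 3\mid a$, $3\nmid b$, so $v_3(a^2+3b^6)=1<n$) to exclude $3\mid c$. You merely spell out the details the paper leaves implicit, including the observation that $ab\neq 0$ forces $|c|\geq 2$ so that $\gcd(c,6)=1$ indeed yields a prime divisor of $c$ different from $2$ and $3$.
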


\begin{proof}
Because $(a,b,c)$ is a solution of $a^2 + 3b^6=c^p$ we have that $3\nmid c$. Because $p\geq 3$ and $a^2+3b^6\not\equiv 0\mod 8$ we have that $2\nmid c$.
\end{proof}

Because of Lemma \ref{lem:cm_j} we assume that $E$ has no complex multiplication. The curve $E$ is a $\QQ$-curve because it is $3$-isogenous to its conjugate and the isogeny is defined over $K$ (see \textit{IsQcurve.m}). We make a choice of isogenies $\mu(\sigma):{}^{\sigma}E\longmapsto E$ such that $\mu(\sigma)=1$ for $\sigma\in G_{K}$ and $\mu(\sigma)$ equal to the $3$-isogeny above for $\sigma\not\in G_K$. 

Let $d$ be the \textit{degree map} (see \cite{Quer00}), then we have that $d(\GQ)=\{1,3\}\subset \QQ^*/\QQ^{*2}$. The fixed field $K_d$ of the kernel of the degree map is $\QQ(\sqth)$. Then $(a,d)=(-3,3)$ is a dual basis in the terminology of \cite{Quer00}. We can see that $(-3,3)$ is unramified and so $\epsilon = 1$, $K_\epsilon=\QQ$ and $K_\beta = \QQ(\sqth)$. Moreover, we have $\beta(\sigma)=\sqrt{d(\sigma)}$ and so $M_\beta=\QQ(\sqrt{3})$.

Let $A_\beta=\Res_{K/\QQ}E$. Since $K_\beta=K$ we understand that $\xi_K(E)$ has trivial Schur class. Thus from \cite[Theorem 5.4]{Quer00} we have that $A_\beta$ is a $\GL_2$-type variety with $\QQ$-endomorphism algebra isomorphic to $M_\beta$.

Let $\fp_2$ and $\fp_3$ be the primes in $K$ above $2$ and $3$ respectively.

\begin{lem}\label{lem:conductor_E3}
The elliptic curve $E$ is a minimal model with conductor equal to\footnote{For some of the computations it is more convenient to use the isomorphic to $E$ curve 
$$E^\prime : Y^2 + 6\sqth bXY - 12(\sqth b^3 + a) Y = X^3.$$} 
\begin{equation}\label{eq:conductor_E3}
N(E) = \fp_2^2\cdot\fp_3^8\cdot\prod_{\fp\mid c}\fp.
\end{equation}
\end{lem}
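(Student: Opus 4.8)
The plan is to determine the conductor exponent one prime at a time, via Tate's algorithm, after first rewriting the discriminant in a form adapted to the solution $(a,b,c)$. Using $(a+\sqth b^3)(a-\sqth b^3)=a^2+3b^6=c^p$ together with \eqref{eq:Delta} one gets
\begin{equation}
\Delta(E)=-2^8\cdot 3^7\cdot c^p\cdot (a+\sqth b^3)^2,
\end{equation}
so that outside $\fp_2$ and $\fp_3$ the primes of bad reduction are exactly those dividing $c$. By Lemma \ref{lem:2_3_nmid_c} we may take $\gcd(c,6)=1$, and primitivity gives that $a,b,c$ are pairwise coprime; these two facts drive all the local computations below.

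For a rational prime $\ell\mid c$ (so $\ell\geq 5$), since $\ell\mid a^2+3b^6$ with $\ell\nmid b$, the element $-3$ is a square modulo $\ell$, so $\ell\equiv 1\pmod 3$ and $\ell$ splits in $K$, say $(\ell)=\fp\bar\fp$. If $\fp$ divided both of $a\pm\sqth b^3$ it would divide $2a$ and $2\sqth b^3$, hence $a$ and $b$, contradicting primitivity; so $\fp$ divides exactly one factor and $v_\fp(\Delta)>0$. Feeding the resulting congruence $a\equiv\mp\sqth b^3\pmod\fp$ into \eqref{eq:c4} gives $4a-5\sqth b^3\equiv -9\sqth b^3$ or $-\sqth b^3$, which is a unit at $\fp$, so $v_\fp(c_4)=0$. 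Hence $E$ has multiplicative reduction at each of $\fp,\bar\fp$, contributing conductor exponent $1$ and producing the factor $\prod_{\fp\mid c}\fp$.

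It remains to treat $\fp_2$ and $\fp_3$, where the reduction is additive. Since $c$ is odd, $v_{\fp_2}(a\pm\sqth b^3)=0$, so \eqref{eq:Delta}, \eqref{eq:c4} give $v_{\fp_2}(\Delta)=8$ and $v_{\fp_2}(c_4)\geq 4$; then $v_{\fp_2}(j)\geq 0$ by \eqref{eq:jinvariant}, so the reduction is additive of potentially good type, and $v_{\fp_2}(\Delta)=8<12$ forces the model to be minimal at $\fp_2$. I would then run Tate's algorithm on the auxiliary model $E'$ of the footnote (whose coefficients are better suited to the inert prime $\fp_2$, with residue field $\mathbb F_4$), splitting into the finitely many cases given by $a,b$ modulo a small power of $2$, to pin down the Kodaira type and read off conductor exponent $2$; equivalently, via the Ogg--Saito formula $f=v(\Delta_{\min})+1-m$ this amounts to showing the special fibre has $m=7$ components. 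At $\fp_3=(\sqth)$, where $v_{\fp_3}(3)=2$, primitivity shows at most one of $a,b$ is divisible by $3$, and in either case $v_{\fp_3}(a\pm\sqth b^3)=0$, giving $v_{\fp_3}(\Delta)=14$. The delicate point is minimality: the naive test $v(\Delta)<12$ is unavailable, so one must invoke Kraus's conditions (or Tate's algorithm directly) to confirm that no admissible change of variables lowers the invariants, i.e.\ $v_{\fp_3}(\Delta_{\min})=14$, after which Tate's algorithm yields the Kodaira type and conductor exponent $8$. Combining the three contributions with good reduction everywhere else gives \eqref{eq:conductor_E3}.

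The hard part is the analysis at $\fp_2$ and especially at $\fp_3$: in residue characteristics $2$ and $3$ the conductor is not determined by $v(\Delta)$ and $v(c_4)$ alone, the simple minimality criterion fails, and the wild ramification makes the large exponent $8$ at $\fp_3$ genuinely nontrivial to establish. This is exactly where one descends to explicit congruence cases (and to the model $E'$), and where the computation is most naturally verified by machine.
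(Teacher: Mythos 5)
Your proposal is correct and takes essentially the same route as the paper: multiplicative reduction at the split primes above $\ell\mid c$ (read off from $v_\fp(\Delta)>0$, $v_\fp(c_4)=0$), and a deferred Tate's-algorithm case analysis at $\fp_2$ and $\fp_3$, where the paper likewise finds type $IV^*$ at both primes with $v_{\fp_3}(\Delta)=14$ and $v_{\fp_2}(\Delta)=8$ (hence exponents $8$ and $2$ via Ogg's formula, matching your $m=7$ component count), the only difference being that at $\fp_2$ the paper first narrows the type to $I_0^*$, $I_1^*$ or $IV^*$ using Papadopoulos's Tableau IV before excluding the first two. One microscopic slip: in the case $3\mid a$ one would have $v_{\fp_3}(a\pm\sqth b^3)=1$ rather than $0$, but that case is vacuous, since $3\mid a$ with $3\nmid b$ forces $v_3(a^2+3b^6)=1$, contradicting $3\nmid c$ and $p\geq 3$, so your conclusion $v_{\fp_3}(\Delta)=14$ stands.
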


\begin{proof}
Let assume that $\fp$ is a prime in $K$ that does not divide $2,3$. Then from \eqref{eq:Delta} and \eqref{eq:c4} we understand that $E$ has multiplicative reduction at $\fp$.

Let $\fp_3$ be the prime in $K$ above $3$. From Tate's algorithm we can prove that $E$ has $IV^*$ reduction type and because $v_{\fp_2}(\Delta)=14$ we have the exponent for $\fp_3$.

Let $\fp_2$ be the prime in $K$ above $2$. Because $p\geq 3$ we have that $2\nmid c$, Lemma \ref{lem:2_3_nmid_c}. So, we have 

$$(v_{\fp(2)}(c_4),v_{\fp(2)}(c_6),v_{\fp(2)}(\Delta))=
\begin{cases}
(\geq 7,7,8) & \text{if }v_2(b)>0,\\
(4,6,8) & \text{otherwise.}
\end{cases}$$
From \cite[Tableau IV]{Papadopoulos93} we conclude that $E$ has $I_0^*$, $I_1^*$ or $IV^*$ reduction type. Applying Tate's algorithm we can show that $E$ has neither $I_0^*$ nor $I_1^*$ reduction type.
\end{proof}

\begin{lem}\label{lem:conductor_A}
The conductor of $A_\beta$ is 
\begin{equation}
d^2_{K/\QQ}\cdot \Norm_{K/\QQ}(N(E))=2^{4}\cdot 3^{10}\cdot \prod_{p\mid c}p^2.
\end{equation}
\end{lem}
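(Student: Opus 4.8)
The plan is to compute the conductor of $A_\beta$ directly from its construction as the restriction of scalars $\Res_{K/\QQ}E$, using the standard conductor formula for Weil restriction. Recall that for an abelian variety obtained by restriction of scalars from a quadratic field $K/\QQ$, the conductor of $\Res_{K/\QQ}E$ is governed by the relative discriminant together with the norm of the conductor of $E$ over $K$. Concretely, I would invoke the formula
\begin{equation*}
N(\Res_{K/\QQ}E) = d_{K/\QQ}^2\cdot \Norm_{K/\QQ}\bigl(N(E)\bigr),
\end{equation*}
where $d_{K/\QQ}$ is the discriminant of $K=\QQ(\sqth)$ (so $d_{K/\QQ}=-3$ and $d_{K/\QQ}^2 = 9 = 3^2$) and $N(E)$ is the conductor ideal computed in Lemma \ref{lem:conductor_E3}. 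Since the statement of the lemma already records this expression, the real content is to evaluate $\Norm_{K/\QQ}(N(E))$ explicitly and combine the prime-by-prime contributions.

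First I would substitute the factorization of $N(E)$ from \eqref{eq:conductor_E3}, namely $N(E)=\fp_2^2\cdot\fp_3^8\cdot\prod_{\fp\mid c}\fp$, and take norms term by term. For the prime $\fp_3$ above $3$: since $3$ ramifies in $K=\QQ(\sqth)$, we have $\Norm_{K/\QQ}(\fp_3) = 3$, so $\Norm_{K/\QQ}(\fp_3^8) = 3^8$; multiplying by the discriminant factor $3^2$ gives the total power $3^{10}$ at the prime $3$. For the prime $\fp_2$ above $2$: here $2$ is inert in $K$ (since $-3\equiv 5 \bmod 8$ is a non-residue, $2$ does not split), so $\Norm_{K/\QQ}(\fp_2) = 2^2$ and hence $\Norm_{K/\QQ}(\fp_2^2) = 2^4$, matching the factor $2^4$ in the claimed formula.

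Next I would handle the primes $\fp$ dividing $c$. By Lemma \ref{lem:2_3_nmid_c} every such $\fp$ lies above a rational prime $p\neq 2,3$, and $E$ has multiplicative reduction there by the proof of Lemma \ref{lem:conductor_E3}, contributing a single power of $\fp$. For each rational prime $p\mid c$, the product $\prod_{\fp\mid p}\Norm_{K/\QQ}(\fp)$ over primes above $p$ equals $p^2$: if $p$ splits in $K$ there are two primes each of norm $p$, and if $p$ is inert there is one prime of norm $p^2$; either way one gets $p^2$. Hence $\Norm_{K/\QQ}\bigl(\prod_{\fp\mid c}\fp\bigr)=\prod_{p\mid c}p^2$, which is the final factor. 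The main subtlety, and the step I would present most carefully, is justifying the clean conductor formula for $\Res_{K/\QQ}E$ in the quadratic case and confirming that no wild or extra contributions arise at the ramified prime $3$; this requires citing the standard behaviour of conductors under restriction of scalars (as in \cite{Quer00,Ribet04}) and checking that the contribution of the different/discriminant at $3$ is exactly the factor $d_{K/\QQ}^2$ with no hidden correction. Assembling the three pieces $3^{10}$, $2^4$, and $\prod_{p\mid c}p^2$ yields the stated value.
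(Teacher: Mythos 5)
Your proof is correct and takes essentially the same route as the paper: the paper's proof is a one-line appeal to the conductor formula for restriction of scalars, citing \cite{Milne72} (Proposition 1, which is the precise source for the identity you invoke, rather than \cite{Quer00,Ribet04}) together with the observation that $K=\QQ(\sqth)$ is unramified outside $3$. Your prime-by-prime evaluation of $d_{K/\QQ}^2\cdot\Norm_{K/\QQ}(N(E))$ --- with $2$ inert, $3$ ramified, and each $p\mid c$ contributing $p^2$ whether split or inert --- simply makes explicit the arithmetic the paper leaves to the reader.
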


\begin{proof}
This is an immediate consequence of \cite[Proposition 1]{Milne72} and the fact that $K$ is unramified outside $3$.
\end{proof}

Since $A_\beta$ is of $\GL_2$-type with $M_\beta=\QQ(\sqrt{3})$, the conductor $N_{A_\beta}$of the system of $M_{\beta,\pi}[G_\QQ]$-modules $\left\lbrace\widehat V_\pi(A_\beta)\right\rbrace$ is given by
\begin{equation}\label{eq:conductor_system}
N_{A_\beta} = 2^2\cdot 3^5\cdot \prod_{p\mid c}p
\end{equation}
as it is explained in \cite{Chen10} where $M_{\beta,\pi}$ is the completion of $M_\beta$ with respect to $\pi$. In the following lines we compute the Serre invariants $N_\rho=N(\brhoE)$, $k_\rho = k(\brhoE)$ and  $\epsilon_\rho = \epsilon(\brhoE)$.

\begin{prop}
The representation $\brphiE\mid_{I_p}$ is finite flat for $p\neq 2,3$.
\end{prop}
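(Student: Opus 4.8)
The plan is to show that the restriction of $\brphiE$ to the inertia group $I_p$ at a prime $p\neq 2,3$ extends to a finite flat group scheme over the ring of integers of the appropriate local field, which is equivalent to $E$ having good reduction or acquiring good reduction after a finite unramified extension at every prime above $p$ in $K$.

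First I would observe that the relevant local analysis happens at primes of $K$ lying above $p$, where $p\neq 2,3$. By Lemma \ref{lem:conductor_E3}, the conductor $N(E)$ involves only $\fp_2$, $\fp_3$, and the primes $\fp\mid c$, so at any prime $\fp$ of $K$ above $p$ with $\fp\nmid c$ the curve $E$ has good reduction, and there $\brphiE\mid_{I_\fp}$ is trivially finite flat since $E[p]$ already comes from a finite flat group scheme. The only remaining primes to treat are those $\fp\mid c$, where by the proof of Lemma \ref{lem:conductor_E3} the curve $E$ has multiplicative reduction.

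Next I would invoke the standard fact that at a prime of multiplicative reduction $\fp$ lying above $p$, the residual representation $\phiE\bmod p$ restricted to inertia $I_\fp$ is finite flat precisely when $p\mid v_\fp(\Delta(E))$. The Tate parametrization gives $\brphiE\mid_{I_\fp}$ the shape $\begin{pmatrix} \chi_p & * \\ 0 & 1\end{pmatrix}$, and the extension class is unramified (hence the representation is finite flat) exactly when the $p$-adic valuation of $j(E)$, equivalently of $\Delta(E)$, is divisible by $p$. So the key computation is to read off $v_\fp(\Delta(E))$ from \eqref{eq:Delta} for $\fp\mid c$ and verify divisibility by $p$. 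Since $(a,b,c)$ is a primitive solution of \eqref{eq:main} and $\fp\mid c$, one has $\fp\mid(a-\sqth b^3)(a+\sqth b^3)$ with $a^2+3b^6=c^p$, so the valuation of the relevant factor of $\Delta(E)$ at $\fp$ is a multiple of $p$; coprimality of $a,b,c$ ensures $\fp$ divides only one of the two conjugate factors, so the full contribution to $v_\fp(\Delta)$ is $p\cdot v_\fp(c)$, a multiple of $p$.

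The main obstacle, and the step deserving the most care, is the bookkeeping at the prime $\fp\mid c$: I must confirm that $\fp\nmid 2\cdot 3$, that $E$ is given by a minimal model at $\fp$ (so that $v_\fp(\Delta)$ genuinely records the multiplicative reduction data), and that $p\mid v_\fp(\Delta)$. The factorization $\Delta(E)=-2^8\cdot 3^7\cdot(a-\sqth b^3)(a+\sqth b^3)^3$ together with $a^2+3b^6=c^p$ and primitivity makes this a direct valuation count, and then the finite-flat criterion for multiplicative reduction (see e.g. the discussion in \cite{EllenbergSkinner01}) yields the conclusion that $\brphiE\mid_{I_p}$ is finite flat away from $2$ and $3$.
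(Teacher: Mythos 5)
Your proof is correct and follows essentially the same route as the paper: good or multiplicative reduction at every prime $\fp$ of $K$ above $p$ via Lemma \ref{lem:conductor_E3} (together with $I_p\subseteq G_K$ since $K$ ramifies only at $3$), and at the multiplicative primes $\fp\mid c$ the divisibility $p\mid v_\fp(\Delta)$ coming from $(a-\sqth b^3)(a+\sqth b^3)=c^p$ and primitivity --- you merely spell out the Tate-curve finite-flatness criterion that the paper's one-line proof leaves implicit. One tiny bookkeeping slip: if $\fp$ divides the cubed factor $(a+\sqth b^3)^3$ of $\Delta(E)$ then $v_\fp(\Delta)=3p\,v_\fp(c)$ rather than $p\,v_\fp(c)$, but this is still a multiple of $p$, so the conclusion is unaffected.
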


\begin{proof}
Let $\fp$ be a prime above $p$. By Lemma \ref{lem:conductor_E3} we know that $E$ has good or multiplicative reduction at $\fp$. In the case of multiplicative reduction the exponent of $\fp$ in the minimal discriminant of $E$ is divisible by $p$. Finally, $K$ is only ramified at $3$ and so $I_p\subseteq G_K$.
\end{proof}

\begin{prop}
The representation $\brphiE\mid_{I_\ell}$ is trivial for $\ell\neq 2,3,p$.
\end{prop}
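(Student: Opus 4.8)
The plan is to fix a prime $\ell\neq 2,3,p$ together with a prime $\fp$ of $K$ above $\ell$ (so that $\fp\nmid 2,3$) and to treat the inertia group $I_\ell\subset G_K$ at $\fp$ according to the reduction type of $E$ there. By Lemma~\ref{lem:conductor_E3} the curve $E$ has either good or multiplicative reduction at every prime not dividing $2$ and $3$, so these two cases are exhaustive. If $E$ has good reduction at $\fp$, then since $\ell\neq p$ the criterion of N\'eron--Ogg--Shafarevich shows that $\phiE$ is unramified at $\fp$; a fortiori its reduction $\brphiE$ is unramified at $\fp$, that is, $\brphiE\mid_{I_\ell}$ is trivial. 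This case uses nothing special about the Frey curve.

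The substantive case is multiplicative reduction, which by \eqref{eq:conductor_E3} occurs exactly at the primes $\fp\mid c$. Here I would invoke the theory of the Tate curve over the completion $K_\fp$: the action of the decomposition group on $E[p]$ has the shape $\left(\begin{smallmatrix}\chi_p\eta & *\\ 0 & \eta\end{smallmatrix}\right)$, where $\eta$ is an unramified character and $\chi_p$ is the mod-$p$ cyclotomic character, and the restriction of the off-diagonal extension class to inertia is the image of the Kummer class of the Tate parameter $q$, whose valuation satisfies $v_\fp(q)=v_\fp(\Delta_{\min})$. Since $\ell\neq p$, both $\chi_p$ and $\eta$ restrict trivially to $I_\ell$, so $\brphiE\mid_{I_\ell}$ is trivial precisely when that Kummer class vanishes, i.e. precisely when $p\mid v_\fp(\Delta_{\min})$.

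It therefore remains to establish $p\mid v_\fp(\Delta_{\min})$ at each $\fp\mid c$, and this is the step I expect to carry the weight. The identity $a^2+3b^6=c^p$ factors in $\OK$ as $(a-\sqth b^3)(a+\sqth b^3)=c^p$. Since the sum and difference of the two factors are $2a$ and $2\sqth b^3$, any common prime divisor away from $2$ and $3$ would divide both $a$ and $b^3$, contradicting primitivity; hence the two factors are coprime at $\fp$, and $\fp$ divides exactly one of them. Its $\fp$-valuation then equals $v_\fp(c^p)=p\,v_\fp(c)$, a multiple of $p$. Because $v_\fp(c_4)=0$ at a multiplicative prime, the given model is already minimal at $\fp$, so $v_\fp(\Delta_{\min})=v_\fp(\Delta)$, and by \eqref{eq:Delta} this valuation is either $v_\fp(a-\sqth b^3)$ or $3\,v_\fp(a+\sqth b^3)$, both of which are divisible by $p$. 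This yields $p\mid v_\fp(\Delta_{\min})$ and finishes the multiplicative case. The only delicacies are keeping track of which of the two factors $\fp$ divides and confirming that the displayed model is minimal away from $2$ and $3$.
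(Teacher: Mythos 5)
Your proof is correct and takes essentially the same route as the paper's: the paper likewise reduces to Lemma \ref{lem:conductor_E3} (good or multiplicative reduction at primes away from $2,3$), asserts that at multiplicative primes the exponent of the minimal discriminant is divisible by $p$, and uses that $K$ ramifies only at $3$ to place $I_\ell$ inside $G_K$. The only difference is that you supply the details the paper leaves implicit --- N\'eron--Ogg--Shafarevich in the good case, the Tate-curve/Kummer-class criterion in the multiplicative case, and the valuation computation $v_\fp(\Delta_{\min})\in\{p\,v_\fp(c),\,3p\,v_\fp(c)\}$ coming from the coprime factorization $(a-\sqth b^3)(a+\sqth b^3)=c^p$ over $\OK$ --- all of which are accurate.
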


\begin{proof}
Let $\fl$ be a prime above $\ell$. Because of  Lemma \ref{lem:conductor_E3} we know that $E$ has good or multiplicative reduction at $\fl$. In the case of multiplicative reduction the exponent of $\fl$ in the minimal discriminant of $E$ is divisible by $p$. Finally, $K$ is only ramified at $3$ and so $I_\ell\subseteq G_K$.
\end{proof}

\begin{prop}\label{prop:rep_conductor}
Suppose $p\neq 2,3$. Then $N_\rho = 972$.
\end{prop}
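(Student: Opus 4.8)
The plan is to compute $N_\rho = N(\brhoE)$, the prime-to-$p$ Artin conductor of the residual representation, by working prime-by-prime and comparing against the conductor of its $\pi$-adic lift. Recall that $\rhoE$ is exactly the compatible system $\left\lbrace\widehat V_\pi(A_\beta)\right\rbrace$, whose conductor is recorded in \eqref{eq:conductor_system}: away from $p$ it has exponent $2$ at the prime $2$, exponent $5$ at the prime $3$, and exponent $1$ at each prime dividing $c$. Since reduction modulo $\pi$ can only make the space of inertia invariants larger, the local conductor exponent of $\brhoE$ at any $\ell\neq p$ is at most that of $\rhoE$. As $972 = 2^2\cdot 3^5$, the content of the proposition is that the primes dividing $c$ drop out of $N_\rho$ entirely, while the exponents $2$ and $5$ at $2$ and $3$ are preserved under reduction.

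First I would dispose of the primes $\ell\neq 2,3,p$, which include all primes dividing $c$. For such $\ell$ the field $K=\QQ(\sqth)$ is unramified, so $I_\ell\subseteq G_K$, and the two preceding propositions show that $\brphiE\mid_{I_\ell}$ is trivial. Since $\mathbb P\brhoE\mid_{G_K}\simeq\mathbb P\brphiE$ and the splitting map $\beta$, having values in $M_\beta=\QQ(\sqrt 3)$ and splitting field $K$, is unramified away from $3$, I would conclude that $\brhoE\mid_{I_\ell}$ is trivial as well. Thus these primes contribute nothing to $N_\rho$; in particular this is the level-lowering step that removes the primes dividing $c$, reflecting that at such primes $E$ has multiplicative reduction with the exponent of the minimal discriminant divisible by $p$.

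It then remains to show that at $\ell=2$ and $\ell=3$ the residual conductor exponent equals the $\pi$-adic one, namely $2$ and $5$. Here I would use the hypothesis $p\neq 2,3$ together with the fact, coming from Lemma \ref{lem:conductor_E3}, that $E$ has additive potentially good reduction at $\fp_2$ and $\fp_3$, so that $I_2$ and $I_3$ act through a \emph{finite} quotient. The order of this image is a product of its wild part, a power of $\ell$, and a tame cyclic part of order dividing a fixed bound (as for the usual reduction-type invariants), both of which are prime to $p$ for $p\geq 5$; the contribution of the twist by $\beta$ at $3$ is likewise of order prime to $p$. Because the image of inertia has order prime to $p$, reduction modulo $\pi$ preserves the dimensions of inertia invariants at every ramification step as well as the Swan conductor, so the local conductor exponent does not drop. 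This yields exponent $2$ at $2$ and $5$ at $3$, giving $N_\rho = 2^2\cdot 3^5 = 972$.

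The main obstacle is precisely this last preservation claim at $2$ and $3$: one must control the image of inertia well enough to rule out any accidental growth of invariants modulo $\pi$. The cleanest route is the $p'$-order argument above, valid for $p\geq 5$; should one want to avoid it, the alternative is a direct computation of $\brhoE\mid_{I_2}$ and $\brhoE\mid_{I_3}$ from the explicit model and Tate's algorithm over $K$ combined with the ramified twist by $\beta$, which reproduces the exponents $2$ and $5$ by hand.
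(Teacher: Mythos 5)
Your proposal is correct and takes essentially the same route as the paper: trivial inertia at $\ell\neq 2,3,p$ via the unramifiedness of $K_\beta$ away from $3$ together with the triviality of $\brphiE\mid_{I_\ell}$, and at $\ell=2,3$ the observation that inertia acts through a finite group of order divisible only by $2$ and $3$, hence prime to $p$, so the exponents $2$ and $5$ recorded in \eqref{eq:conductor_system} are preserved under reduction modulo $\pi$. The only cosmetic difference is that the paper deduces potential good reduction at primes above $2$ and $3$ directly from the $j$-invariant \eqref{eq:jinvariant}, whereas you invoke the additive reduction types of Lemma \ref{lem:conductor_E3}; both are available and the argument is unchanged.
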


\begin{proof}
Because we want to compute the Artin conductor of $\brhoE$, we consider only ramification at primes above $\ell\neq p$.

Let consider $\ell\neq 2,3,p$. We recall that $K=K_\beta$. Because $\ell\neq 3$ we have that $K_\beta$ is unramified at $\ell$, so $I_\ell\subset G_{K}$. Because $\brhoE|_{G_K}\simeq\brphiE$ and $\brphiE\mid_{I_\ell}$ is trivial we have that $\brhoE$ is trivial at $I_\ell$. Thus, $\brhoE$ is unramified outside $2,3,p$.

Suppose $\ell=2,3$. From \eqref{eq:jinvariant} we understand that $E$ has potential good reduction at primes above $2,3$. That means that $\phiE|_{I_\ell}$ factors through a finite group of order divisible only by $2,3$. Thus, $\rhoE|_{I_\ell}$ factors through a finite group of order divisible only by $2,3$. It follows that the exponent of $\ell$ in the conductor of $\brhoE$ is the same as in the conductor of $\rhoE$ as $p\neq 2, 3$.
\end{proof}

\begin{prop}\label{prop:rep_weight}
Suppose $p\neq 2,3$. Then $k_\rho = 2$.
\end{prop}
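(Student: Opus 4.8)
The plan is to exploit the fact that the Serre weight $k_\rho = k(\brhoE)$ is determined entirely by the restriction $\brhoE\mid_{I_p}$ of the residual representation to the inertia subgroup at $p$, together with the finite flatness already established in the preceding proposition. Since $p\neq 2,3$ and $K=\QQ(\sqth)$ is ramified only at $3$, the inertia group $I_p$ at $p$ is contained in $G_K$. Combined with the isomorphism $\brhoE\mid_{G_K}\simeq\brphiE$ recorded in Section \ref{sec:preliminaries}, this yields
\begin{equation*}
\brhoE\mid_{I_p}\simeq\brphiE\mid_{I_p}.
\end{equation*}

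First I would invoke the previous proposition, which shows that $\brphiE\mid_{I_p}$ is finite flat for $p\neq 2,3$. Transporting this through the isomorphism above, $\brhoE\mid_{I_p}$ is finite flat as well. By the finite-flat case of Serre's weight recipe---equivalently, by the results of Fontaine and Raynaud on finite flat group schemes over $\ZZ_p$ for $p>2$---a two-dimensional mod-$p$ representation that is finite flat at $p$ is \emph{peu ramifi\'ee} and has Serre weight equal to $2$. This gives $k_\rho = 2$.

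The main point to verify carefully is that the reduction step is legitimate: the weight is an invariant of $\brhoE$ as a $\GQ$-representation, yet finite flatness has been established only for $\brphiE$, which is a $G_K$-representation. The identification $\brhoE\mid_{I_p}\simeq\brphiE\mid_{I_p}$ is exactly what bridges this gap, and it rests on the inclusion $I_p\subseteq G_K$; this inclusion fails when $p=3$, where $K$ ramifies, which is precisely why the hypothesis $p\neq 2,3$ is imposed. Once this inertial comparison is in place, the conclusion $k_\rho=2$ follows immediately from the finite-flat characterization of weight $2$, with no further case analysis required.
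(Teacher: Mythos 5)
Your reduction to inertia is exactly the paper's: since $K=\QQ(\sqth)$ is ramified only at $3$, for $p\neq 2,3$ one has $I_p\subset G_K$, so $\brhoE\mid_{I_p}\simeq\brphiE\mid_{I_p}$ and the finite flatness from the preceding proposition transports over. The gap is in your final step: it is \emph{not} true that finite flatness alone forces the Serre weight to be $2$. The criterion the paper actually cites, \cite[Prop.~4]{Serre87}, says $k(\rho)=2$ if and only if $\rho$ locally at $p$ arises from a finite flat group scheme over $\ZZ_p$ \emph{and} the determinant of $\rho$ is the mod-$p$ cyclotomic character $\chi_p$. Without the determinant hypothesis the conclusion fails: the representation attached to the \'etale group scheme $(\ZZ/p\ZZ)^2$ is finite flat but unramified at $p$, and Serre's recipe assigns it weight $p$, not $2$; similarly $\mu_p\oplus\mu_p$ is finite flat with determinant $\chi_p^2$ and weight different from $2$. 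Your phrase ``finite flat, hence peu ramifi\'ee, hence weight $2$'' conflates the general recipe with the special situation where $\rho\mid_{I_p}$ is an extension of the trivial character by $\chi_p$, which is the only place the peu ramifi\'ee/tr\`es ramifi\'ee dichotomy enters.

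The repair is one line, and it is precisely the clause the paper's proof includes and you omitted: $\det\brphiE=\chi_p$, which holds because the determinant of the Galois representation on the $p$-torsion of an elliptic curve is the mod-$p$ cyclotomic character (Weil pairing). With finite flatness of $\brphiE\mid_{I_p}$ together with this determinant identity, \cite[Prop.~4]{Serre87} yields $k_\rho=2$. So your architecture matches the paper's proof exactly --- same inertial comparison via $I_p\subseteq G_K$, same use of the finite flatness proposition --- but the weight criterion is misquoted, dropping a hypothesis that happens to be satisfied here yet must be verified for the cited result to apply.
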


\begin{proof}
The weight is determined by $\brhoE|_{I_p}$. For $p\neq 3$ we have that $K$ is unramified at $p$ and so $I_p\subset G_K$. Because $\brhoE|_{G_K} \simeq \brphiE$, $\brphiE\mid_{I_p}$ is finite flat and the determinant of $\brphiE$ is the cyclotomic $p$-th character then from \cite[Prop. 4]{Serre87} we have the conclusion.
\end{proof}

\begin{prop}
The character $\epsilon_\rho$ is trivial.
\end{prop}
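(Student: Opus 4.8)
The plan is to read $\epsilon_\rho$ off the determinant of the residual representation, using that the weight has already been pinned down. Recall that the Serre nebentypus $\epsilon_\rho = \epsilon(\brhoE)$ is the character determined by the identity $\det\brhoE = \epsilon_\rho\cdot\bar\chi_p^{\,k_\rho-1}$, where $\bar\chi_p$ denotes the mod $p$ cyclotomic character. Since Proposition \ref{prop:rep_weight} gives $k_\rho = 2$, this relation reduces to $\det\brhoE = \epsilon_\rho\cdot\bar\chi_p$, so it suffices to prove $\det\brhoE = \bar\chi_p$.

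To compute the determinant I would return to the formula $\det(\rhoE) = \epsilon^{-1}\cdot\chi_p$ from Section \ref{sec:preliminaries}, together with the character $\epsilon(\sigma) = \beta(\sigma)^2/\deg\mu(\sigma)$. With our choice of isogenies, $\deg\mu(\sigma) = 1$ for $\sigma\in G_K$ and $\deg\mu(\sigma) = 3$ otherwise, while $\beta(\sigma) = \sqrt{d(\sigma)}$ takes the values $1$ and $\sqrt 3$ on these same two classes. Hence $\beta(\sigma)^2 = \deg\mu(\sigma)$ for every $\sigma$, so $\epsilon$ is identically $1$; this is exactly the unramifiedness of the dual basis $(-3,3)$ already noted in this section. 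Consequently $\det(\rhoE) = \chi_p$.

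Finally I would reduce modulo $p$: since $\det\rhoE = \chi_p$ takes values in $\ZZ_p^*$ and reduces to the mod $p$ cyclotomic character, we obtain $\det\brhoE = \bar\chi_p$. Comparing this with $\det\brhoE = \epsilon_\rho\cdot\bar\chi_p$ forces $\epsilon_\rho = 1$, as claimed.

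I do not expect a genuine obstacle here; the substance of the statement is the bookkeeping that $\epsilon = 1$, which was fixed the moment the splitting map $\beta$ was chosen so that $\beta^2$ agrees with the degree map. The only point requiring a little care is the convention relating $\det\brhoE$, the weight, and the nebentypus, and checking that no prime-to-$p$ twist sneaks in when passing from $\rhoE$ to its reduction $\brhoE$. Because $\epsilon$ is trivial already as a character of $\GQ$, and not merely after reduction, this causes no difficulty.
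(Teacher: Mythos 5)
Your proposal is correct and takes essentially the same route as the paper: the paper's proof is the one-line observation that $\epsilon_\rho$ is trivial ``as a consequence of the fact that $\epsilon$ is trivial and the properties of $\rhoE$,'' where $\epsilon=1$ was established in Section \ref{sec:Frey_curve} from the unramified dual basis $(-3,3)$ --- precisely the fact $\beta(\sigma)^2=\deg\mu(\sigma)$ you verify. You have merely written out the bookkeeping the paper leaves implicit, namely $\det\rhoE=\epsilon^{-1}\chi_p=\chi_p$, reduction mod $p$, and the comparison with $\det\brhoE=\epsilon_\rho\cdot\bar\chi_p^{\,k_\rho-1}$ with $k_\rho=2$.
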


\begin{proof}
This is a consequence of the fact that $\epsilon$ is trivial and the properties of $\rhoE$.
\end{proof}

From \cite[Proposition 3.2]{Ellenberg04} and Lemma \ref{lem:2_3_nmid_c} we have

\begin{prop}\label{prop:reducible_potential}
Let assume that $\brhoE$ is reducible for $p\neq 2,3,5,7,13$. Then $E$ has potentially good reduction at all primes above $\ell > 3$.
\end{prop}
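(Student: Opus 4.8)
The plan is to read off the two Galois characters produced by reducibility and to reduce the statement to Ellenberg's criterion. Suppose $\brhoE$ is reducible, so that its semisimplification splits as $\lambda_1\oplus\lambda_2$ for characters $\lambda_1,\lambda_2:\GQ\to\barFp^*$ with $\lambda_1\lambda_2=\det\brhoE=\chi_p$ (using $\epsilon=1$). Since $\mathbb P\brhoE\mid_{G_K}\simeq\mathbb P\brphiE$, restriction to $G_K$ shows that $E[p]$ carries a $G_K$-stable line; equivalently $E/K$ admits a $K$-rational $p$-isogeny, whose isogeny character is $\lambda_1\mid_{G_K}$ up to the twist by $\beta$. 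The strategy is to pin down the ramification of $\lambda_1$ and $\lambda_2$ at every prime and then feed this into the $\QQ$-curve structure.

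First I would record the local behaviour. At a prime above $\ell\neq 2,3,p$ the representation $\brphiE\mid_{I_\ell}$ is trivial by the proposition preceding Proposition \ref{prop:rep_conductor} (crucially, at the multiplicative primes this uses that $p$ divides the valuation of the minimal discriminant), so both $\lambda_1$ and $\lambda_2$ are unramified there, the $\beta$-twist contributing ramification only at $3$. At $p$ the representation $\brphiE\mid_{I_p}$ is finite flat of weight $2$ with cyclotomic determinant, so on a stable line the inertial character is one of the two level-one fundamental characters; thus $\lambda_1\mid_{I_p}\in\{1,\chi_p\mid_{I_p}\}$, and after possibly interchanging $\lambda_1,\lambda_2$ we may assume $\lambda_1$ is unramified at $p$ (here $K$ is unramified at $p$, so the inertial computation over $K$ transports to $\GQ$). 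At $2$ and $3$ the curve has potential good reduction, so $\brphiE\mid_{I_\ell}$ factors through a finite group of order divisible only by $2,3$ and the ramification of $\lambda_1$ there is bounded independently of $p$.

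Consequently $\lambda_1$ is a character of $\GQ$ that is unramified outside $\{2,3\}$ and of order bounded by a constant independent of $p$, so it factors through the Galois group of a fixed abelian extension of $\QQ$ ramified only at $2,3$ and $\infty$. This is exactly the input of \cite[Proposition 3.2]{Ellenberg04}: comparing the Frobenius eigenvalues $\lambda_1(\Frob_\ell)$ and $\lambda_2(\Frob_\ell)=\ell/\lambda_1(\Frob_\ell)$ with the coefficients of the $\GL_2$-type variety $A_\beta$ and invoking the Weil bound, one finds that such a reducible specialization forces $p\in\{2,3,5,7,13\}$ unless $E$ has no prime of multiplicative reduction above any $\ell>3$. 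Since, by Lemma \ref{lem:2_3_nmid_c}, $c$ has a prime factor $\ell>3$, Lemma \ref{lem:conductor_E3} identifies the primes where multiplicative reduction would occur, and the dichotomy yields precisely that for $p\neq 2,3,5,7,13$ the curve $E$ has potential good reduction at all primes above $\ell>3$.

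The hard part will be the last step, namely reproducing Ellenberg's character estimate in the present setting: one must check that the bounded-order character $\lambda_1$, together with the specific twist by $\beta$ and the real quadratic coefficient field $M_\beta=\QQ(\sqrt 3)$, is genuinely incompatible with multiplicative reduction at a prime above $\ell>3$ once $p>13$. Rather than redo the Weil-bound computation by hand, I would quote \cite[Proposition 3.2]{Ellenberg04} directly, after verifying that the data of our $\QQ$-curve (degree map with image $\{1,3\}$, $\epsilon=1$, $K_\beta=K$, $M_\beta=\QQ(\sqrt 3)$) match the hypotheses of that result.
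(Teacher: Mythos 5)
Your proposal is correct and ultimately takes the same route as the paper: the paper gives no independent argument for this proposition, deducing it directly from \cite[Proposition 3.2]{Ellenberg04} applied to the degree-$3$ $\QQ$-curve $E$ over the imaginary quadratic field $K=\QQ(\sqth)$, whose hypotheses (squarefree degree $d=3$, $(p,d)=1$, $p=11$ or $p>13$) are exactly the ones you verify at the end. Your intermediate sketch of the character-theoretic mechanism inside Ellenberg's proof is harmless but unnecessary (and only approximately faithful to his actual argument, which is a Momose-style isogeny-character analysis rather than a direct comparison with coefficients of $A_\beta$); since you conclude by quoting the result after checking its hypotheses, your proof coincides with the paper's.
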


An immediate consequence of Proposition \ref{prop:reducible_potential} and Lemma \ref{lem:2_3_nmid_c} is the following.

\begin{cor}\label{cor:irreducible_rep}
The representation $\brhoE$ is irreducible for $p\neq 2,3,5,7,13$.
\end{cor}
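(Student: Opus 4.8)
The plan is to argue by contradiction, using the two cited inputs to force a local reduction type at some prime that is incompatible with potentially good reduction. So I would begin by assuming that $\brhoE$ is reducible for some prime $p\neq 2,3,5,7,13$. Under this hypothesis Proposition \ref{prop:reducible_potential} applies verbatim and tells us that $E$ has potentially good reduction at every prime of $K$ lying above a rational prime $\ell>3$. The goal is then to exhibit a single prime of $K$ above such an $\ell$ at which $E$ provably does \emph{not} have potentially good reduction.

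To produce that prime, I would invoke Lemma \ref{lem:2_3_nmid_c}: for a primitive solution $(a,b,c)$ the integer $c$ is divisible by some rational prime $\ell>3$. Choosing a prime $\fp$ of $K$ above this $\ell$ gives $\fp\mid c$, and $\fp\nmid 2,3$. Now I would recall from the proof of Lemma \ref{lem:conductor_E3} that at every prime $\fp$ not dividing $2$ and $3$ the curve $E$ has multiplicative reduction; this is precisely what produces the factor $\prod_{\fp\mid c}\fp$ in the conductor formula \eqref{eq:conductor_E3}. Hence at our chosen $\fp$ the curve $E$ has multiplicative reduction.

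The contradiction then comes from the fact that multiplicative reduction is stable under arbitrary base change: multiplicative reduction at $\fp$ is equivalent to $v_{\fp}(j(E))<0$, and since the valuation of the $j$-invariant only scales (never becomes non-negative) under passage to a finite extension, $E$ can never acquire good reduction at $\fp$ over any such extension. Thus $E$ fails to have potentially good reduction at $\fp$, contradicting the conclusion forced by Proposition \ref{prop:reducible_potential}. Consequently no such $p$ exists, and $\brhoE$ is irreducible for $p\neq 2,3,5,7,13$. I expect the argument to be essentially immediate once both inputs are assembled; the only point needing care — the ``hard part,'' such as it is — is making explicit the incompatibility in the last step, namely that potentially good reduction and multiplicative reduction are mutually exclusive (equivalently that $v_{\fp}(j)<0$ rules out potentially good reduction), rather than leaving it implicit.
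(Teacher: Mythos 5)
Your proof is correct and is exactly the argument the paper intends: the corollary is stated there as an ``immediate consequence'' of Proposition \ref{prop:reducible_potential} and Lemma \ref{lem:2_3_nmid_c}, with the multiplicative reduction at primes dividing $c$ coming from Lemma \ref{lem:conductor_E3} just as you assemble it. One hair-splitting note: multiplicative reduction is not \emph{equivalent} to $v_{\fp}(j)<0$ (the latter characterizes \emph{potentially} multiplicative reduction), but you only use the implication multiplicative $\Rightarrow v_{\fp}(j)<0 \Rightarrow$ not potentially good, which is valid.
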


\begin{prop}
If $p=13$, then $\brhoE$ is irreducible.
\end{prop}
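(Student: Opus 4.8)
The plan is to rule out the remaining reducibility case $p=13$ directly by exploiting the very rigid structure that a reducible residual representation would force on $E$. By Corollary~\ref{cor:irreducible_rep} the only prime where irreducibility is not yet established is $p=13$, so I would assume for contradiction that $\brhoE$ is reducible and derive consequences strong enough to contradict the hypotheses on the solution $(a,b,c)$. The natural first step is to invoke Proposition~\ref{prop:reducible_potential}: reducibility of $\brhoE$ forces $E$ to have potentially good reduction at every prime above $\ell>3$. But by Lemma~\ref{lem:conductor_E3} the conductor of $E$ contains the factor $\prod_{\fp\mid c}\fp$ coming from multiplicative (hence \emph{not} potentially good) reduction at the primes dividing $c$, and by Lemma~\ref{lem:2_3_nmid_c} the integer $c$ has a prime divisor different from $2$ and $3$. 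This is exactly the contradiction that kills the reducible case for all $p\neq 2,3,5,7,13$ in the corollary, so the whole point of a separate $p=13$ statement is that Proposition~\ref{prop:reducible_potential} \emph{excludes} $13$ from its hypothesis, and a different argument is needed.

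Accordingly, I would instead attack the reducible case at $p=13$ by studying the possible isogeny characters. If $\brhoE$ is reducible, its semisimplification is a sum of two characters $\chi_1\oplus\chi_2$ of $G_\QQ$ valued in $\barFp^*$, with $\chi_1\chi_2=\det\brhoE=\chi_{13}$ (using that $\epsilon$ is trivial, as established above). The ramification analysis already carried out in Propositions~\ref{prop:rep_conductor}--\ref{prop:rep_weight} pins down the conductor and the behavior at $13$: each $\chi_i$ is unramified outside $\{2,3,13\}$, and the finite-flat/weight-$2$ condition at $13$ constrains the restriction of the $\chi_i$ to inertia $I_{13}$ to be the trivial character and $\chi_{13}|_{I_{13}}$ in some order. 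Combining the global class-field-theoretic description of characters of conductor supported on $\{2,3\}$ with these local conditions at $13$ leaves only finitely many candidate pairs $(\chi_1,\chi_2)$, each cutting out a $13$-isogeny of the $\QQ$-curve $E$ (equivalently a rational point on a modular curve of the form $X_0(13)$ or a suitable twist) defined over $K=\QQ(\sqth)$.

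The key computational step is then to show that no such isogeny can occur for a genuine primitive solution. Concretely, I would translate each surviving character pair into the existence of a $K$-rational $13$-isogeny on $E_{a,b}$ and examine the corresponding modular parametrization: points on $X_0(13)$ over $K$ correspond to a tractable list (the curve has genus $0$, but the relevant twisted modular curve or the Frey data impose strong divisibility constraints), and I would check, using the explicit formulas \eqref{eq:jinvariant}--\eqref{eq:c4} for the invariants together with the coprimality of $(a,b,c)$, that the resulting conditions on $a,b,c$ force $ab=0$ or a non-primitive solution. The main obstacle I anticipate is the bookkeeping at the prime $13$ itself: since $p=13$ is precisely the excluded prime, the local analysis at $13$ is no longer automatically ``generic,'' and one must carefully determine which of the two inertial possibilities for $\chi_i|_{I_{13}}$ actually arise and whether $E$ could acquire an exceptional $13$-isogeny from its CM-free $\QQ$-curve structure; ruling out the borderline character pairs is where the argument will require the most care, and is the step most likely to rely on a direct Magma computation over $K$ rather than on the uniform inertia argument used for the other small primes.
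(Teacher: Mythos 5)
There is a genuine gap at the heart of your plan: you reduce the problem to the existence of a $K$-rational $13$-isogeny on $E$ and then propose to rule it out via points of $X_0(13)$ over $K=\QQ(\sqth)$, but $X_0(13)$ has genus $0$ and therefore infinitely many $K$-rational points. The Frey curve's $j$-invariant \eqref{eq:jinvariant} sweeps through infinitely many values of $\mathbb{P}^1(K)$ as $(a,b)$ varies, so there is no finite list to check and no ``tractable'' divisibility bookkeeping that terminates; your own parenthetical hedge (``the curve has genus 0, but\dots'') is exactly where the argument collapses. Likewise, while your character-theoretic reduction (semisimplification $\chi_1\oplus\chi_2$ with $\chi_1\chi_2=\chi_{13}$, conductors supported on $\{2,3,13\}$) is sound and does leave finitely many candidate pairs, you give no mechanism that actually eliminates them: the proposed ``direct Magma computation over $K$'' is not specified, and ruling out isogeny characters over a quadratic field in this generic way would amount to reproving a Mazur-type theorem, not a routine verification.

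The idea you are missing is to use the $\QQ$-curve structure a second time: $E$ is $3$-isogenous to its Galois conjugate ${}^\sigma E$, so a $K$-rational $13$-isogeny on $E$ produces a $K$-rational point on $X_0(39)$ (genus $3$, finitely many relevant points), and the Galois conjugation acts on this point through the Atkin--Lehner involution $w_3$, so that its image is in fact a \emph{$\QQ$-rational} point on the quotient $X_0(39)/w_3$. This is exactly the paper's (one-line) proof, which follows \cite[Proposition 17]{BennettChen12}: Kenku's determination \cite{Kenku79} of the $\QQ$-rational points of $X_0(39)/w_3$ shows they are all cuspidal or CM, and the CM possibility is already excluded for the Frey curve by Lemma \ref{lem:cm_j} together with Lemma \ref{lem:2_3_nmid_c}. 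The composition of the two isogenies is what converts an intractable genus-$0$ problem over $K$ into a finite, already-solved problem over $\QQ$; without it your proposal cannot be completed.
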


\begin{proof}
This is similar to \cite[Proposition 17]{BennettChen12} which is based on results in \cite{Kenku79} about $\QQ$-rational points on $X_0(39)/w_3$.
\end{proof}

\section{Proof of Theorem \ref{thm:main}}\label{sec:proof}

\begin{proof}
Let assume that $p\geq 11$ be an odd prime. Let $(a,b,c)$ be a primitive solution of \eqref{eq:main}. We attach to $(a,b,c)$ the curve $E$. Because of the modularity of $\QQ$-curves which follows from Serre's conjecture \cite{KhareWintenberger09a,KhareWintenberger09b,Kisin09}, the Ribet's level lowering \cite{Ribet90} and the results in Section \ref{sec:Frey_curve} we have that there exists a newform $f\in S_2(\Gamma_0(972))$ such that $\brhoE\simeq\brhof$.

There are $7$ newforms of level $972$. Four of them are rational\footnote{Let $f$ be a newform and $K_f$ the eigenvalues field of $f$. Then we say that $f$ is \textit{rational} when $K_f=\QQ$ and \textit{irrational} when $K_f\neq\QQ$.} with complex multiplication by $\QQ(\sqth)$ and the other three are irrational. In Section \ref{sec:eliminate_CM} we show how we can prove that non-solutions arise from the rational newforms for $p\geq 11$ using Ellenberg's analytic method, see Proposition \ref{prop:no_solutions_rational_newforms_Ellenberg}. For the irrational newforms we use Proposition \ref{prop:congruence_criterion} and we prove that $p\leq 7$ (see \textit{CongruenceCriterion.m}).
\end{proof}

\begin{prop}\label{prop:congruence_criterion}
Let $f\in S_2(\Gamma_0(972))$ and $p,q$ be primes such that $p\geq 11$, $q\geq 5$ and $q\neq p$. We define
$$
B(q,f)=
\begin{cases}
N(a_q(E) - a_q(f)) & \text{if }a^2+3b^6\not\equiv 0\mod q\text{ and }\left(\frac{-3}{q}\right)=1,\\
N(a_q(f)^2-a_{q^2}(E)-2q) &\text{if }a^2+3b^6\not\equiv 0\mod q\text{ and }\left(\frac{-3}{q}\right)=-1,\\
N((q+1)^2 - a_q(f)^2) & \text{if } a^2+3b^6\equiv 0\mod q.
\end{cases} 
$$
where $a_{q^i}(E)$ is the trace of $\Frob_q^i$ acting on the Tate module $T_p(E)$. Then $p|B(q,f)$.
\end{prop}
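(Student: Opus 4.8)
The plan is to extract the divisibility from the isomorphism of residual representations $\brhoE\simeq\brhof$ established in the proof of Theorem~\ref{thm:main} (via modularity of $\QQ$-curves and Ribet level lowering to level $972$). Fix a prime $\pi$ above $p$ in the compositum of the coefficient field of $f$ with $M_\beta$. Since $N_\rho=972=2^2\cdot 3^5$ by Proposition~\ref{prop:rep_conductor} and $q\geq 5$, both $\brhoE$ and $\brhof$ are unramified at $q$, so $\brhoE(\Frob_q)$ and $\brhof(\Frob_q)$ are well defined up to conjugacy and have the same characteristic polynomial modulo $\pi$. In particular their traces agree, $\Tr\brhoE(\Frob_q)\equiv a_q(f)\pmod\pi$, and since $\epsilon=1$ we have $\det\brhoE=\chi_p$, hence $\det\brhoE(\Frob_q)\equiv q\pmod\pi$. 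Each of the three cases then amounts to computing $\Tr\brhoE(\Frob_q)$ in terms of the arithmetic of $E$ and matching it against $a_q(f)$.

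First I would treat the good reduction cases, i.e. $a^2+3b^6\not\equiv 0\pmod q$, so that $E$ has good reduction at the primes of $K$ above $q$ by Lemma~\ref{lem:conductor_E3}. When $\left(\frac{-3}{q}\right)=1$ the prime $q$ splits in $K=\QQ(\sqth)$, hence $\Frob_q\in G_K$; using $\brhoE|_{G_K}\simeq\brphiE$ gives $\Tr\brhoE(\Frob_q)\equiv a_q(E)\pmod\pi$, so $\pi\mid a_q(E)-a_q(f)$ and therefore $p\mid N(a_q(E)-a_q(f))$. When $\left(\frac{-3}{q}\right)=-1$ the prime is inert, so $\Frob_q\notin G_K$ but $\Frob_q^2\in G_K$; writing $M=\brhoE(\Frob_q)$ and combining the identity $\Tr(M^2)=(\Tr M)^2-2\det M$ with $M^2=\brhoE(\Frob_q^2)$ and $\brhoE|_{G_K}\simeq\brphiE$ yields $a_{q^2}(E)\equiv a_q(f)^2-2q\pmod\pi$, whence $p\mid N(a_q(f)^2-a_{q^2}(E)-2q)$.

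Next I would treat the multiplicative case $a^2+3b^6\equiv 0\pmod q$, where $E$ has multiplicative reduction at the primes above $q$ and $q$ was removed from the level by Ribet lowering because $p\mid v_{\fq}(\Delta(E))$. Here $\brhoE$ is unramified at $q$, while the characteristic-zero representation attached to the $\GL_2$-type variety $A_\beta$ is, locally at $q$, an unramified quadratic twist of the Steinberg representation; its Frobenius eigenvalues are therefore of the form $\{\zeta,\zeta q\}$ with $\zeta$ a unit, and the constraint $\det\brhoE=\chi_p$ forces $\zeta^2=1$. Reducing modulo $\pi$ gives eigenvalues $\{\zeta,\zeta q\}$ with $\zeta=\pm 1$, so $\Tr\brhoE(\Frob_q)\equiv\pm(q+1)\pmod\pi$. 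Combined with $\Tr\brhoE(\Frob_q)\equiv a_q(f)\pmod\pi$ this gives $a_q(f)\equiv\pm(q+1)$, i.e. $\pi\mid(q+1)^2-a_q(f)^2$, and hence $p\mid N((q+1)^2-a_q(f)^2)$.

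The hard part will be the multiplicative case, specifically checking that the $\QQ$-curve twist by $\beta$ (which is only ramified at $3$) does not perturb the local picture at $q$, and that the Steinberg structure of $A_\beta$ forces the clean eigenvalue pair $\{\zeta,\zeta q\}$ uniformly, independently of whether $q$ splits or is inert in $K$ and of whether the reduction of $E$ is split or non-split multiplicative. The good reduction cases are comparatively routine once the split/inert dichotomy is matched to whether $\Frob_q$ lies in $G_K$; the only point requiring care there is that in the inert case one genuinely recovers a relation for $a_{q^2}(E)$ rather than $a_q(E)$, which is exactly what the trace identity $\Tr(M^2)=(\Tr M)^2-2\det M$ provides.
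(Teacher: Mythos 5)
Your proposal is correct and follows essentially the same route as the paper: the split and inert good-reduction cases are handled identically (your identity $\Tr(M^2)=(\Tr M)^2-2\det M$ with $\det\brhoE(\Frob_q)\equiv q$ is exactly what the paper extracts from the coefficient of $q^{-2s}$ in the Euler factor), and in the multiplicative case your unramified-quadratic-twist-of-Steinberg sketch is precisely the content of the results of Carayol and Darmon--Diamond--Taylor that the paper invokes after noting $q\parallel N_{A_\beta}$ from \eqref{eq:conductor_system}. The only difference is that you unroll those citations into the underlying local argument rather than quoting them.
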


\begin{proof}
From Section \ref{sec:Frey_curve} we recall that $A_\beta = \Res_{K/\QQ}(E)$ and $M_\beta=\QQ(\sqrt{3})$. Let $\pi$ be a prime of $M_\beta$ above $p$. As we mentioned in Section \ref{sec:preliminaries} we have that $\brhoA=\brhoE$ where $\brhoA$ is the mod $\pi$ representation of $G_\QQ$ on the $\pi^n$-torsion points of $A_\beta$. We recall that 
\begin{equation}
\brhoE(\sigma)(1\otimes x)= \beta(\sigma)^{-1}\otimes\mu(\sigma)(\brphiE(\sigma)(x))
\end{equation}
where $\brphiE$ is the representation of $G_K$ acting on $T_p(E)$ and $1\otimes x\in M_{\beta,\pi}\otimes T_p(E)$. We also recall that $\brhoA=\brhoE\simeq\brhof$ and $\beta(\sigma)=\sqrt{d(\sigma)}$.

Let assume the case $a^2+3b^6\equiv 0\mod q$. By \eqref{eq:conductor_system} we have that $q\parallel N_{A_\beta}$ and from \cite[Th\'{e}or\`{e}m (A)]{Carayol86}, \cite[Theorem 3.1]{DarmonDiamondTaylor97} we have that
\begin{equation}
p\mid N(a_q(f)^2 - (q+1)^2).
\end{equation} 

For the rest of the proof we assume that $a^2 + 3b^6\not\equiv 0\mod q$. When $\left(\frac{-3}{q}\right)=1$ we have that $\sigma=\Frob_q\in G_K$ and $\mu(\sigma)=1$, $d(\sigma)=1$, so $\Tr\brhoA(\sigma) = \Tr\brphiE(\sigma)$. Because $\brhoA=\brhoE\simeq\brhof$ we conclude that $a_q(E)\equiv a_q(f)\mod\pi$ and so $p\mid N(a_q(E) - a_q(f))$.

Suppose $\left(\frac{-3}{q}\right)=-1$, then $\sigma=\Frob_q\not\in G_K$. Because $\sigma^2\in G_K$ and similarly to the above lines we have that $\Tr\brhoA(\sigma^2) = \Tr\brphiE(\sigma^2)=a_{q^2}(E)$. We know that
\begin{equation}
\frac{1}{\det(I - \brhoA(\sigma)q^{-s})}=\exp\sum_{n=1}^\infty\Tr\brhoA(\sigma^n)\frac{q^{-ns}}{n}=\frac{1}{1-\Tr\brhoA(\sigma)q^{-s}+qq^{-2s}}
\end{equation}
From the coefficient of $q^{-2s}$ we have that $\Tr\brhoA(\sigma^2)=\Tr\brhoA(\sigma)^2 - 2q$. As above we conclude that $a_q(f)^2\equiv a_{q^2}(E) + 2q\mod\pi$, so $p\mid N(a_q(f)^2-a_{q^2}(E)-2q)$.
\end{proof}

\section{Eliminating the CM forms}\label{sec:eliminate_CM}

In this section we explain and apply the method of Ellenberg \cite{Ellenberg04} which allows us to prove that no solutions of \eqref{eq:main} arise from the rational newforms for $p\geq 11$.

\begin{prop}[Proposition 3.4 \cite{Ellenberg04}]\label{prop:not_in_split_normalizer}
Let $K$ be an imaginary quadratic field and $E/K$ a $\QQ$-curve of squarefree  degree $d$. Suppose the image of $\mathbb P\brhoE$ lies in the normalizer of a split Cartan subgroup of $\PGL_2(\Fp)$, for $p=11$ or $p>13$ with $(p,d)=1$. Then $E$ has potentially good reduction at all primes of $K$ not dividing $6$.
\end{prop}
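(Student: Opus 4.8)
The plan is to prove the contrapositive statement locally: if $E$ had potentially multiplicative reduction at some prime $\fq$ of $K$ not dividing $6$, then the image of $\mathbb{P}\brhoE$ would be forced to contain an element of order $p$, which is impossible inside the normalizer of a split Cartan. First I would record the group theory. A split Cartan subgroup $C$ of $\PGL_2(\Fp)$ is cyclic of order $p-1$, and its normalizer $N$ satisfies $[N:C]=2$, so $|N|=2(p-1)$. Since $p\geq 11$ we have $\gcd(2(p-1),p)=1$, and hence $N$ contains no element of order $p$. By hypothesis the image of $\mathbb{P}\brhoE$ lies in a conjugate of $N$, and projectivizing the compatibility $\brhoE\mid_{G_K}\simeq\brphiE$ recalled in Section~\ref{sec:preliminaries} shows that the image of $\mathbb{P}\brphiE$ also lies in $N$; in particular it has order prime to $p$.

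Next I would run the local analysis at a prime $\fq\nmid 6$ of potential multiplicative reduction, i.e.\ $v_\fq(j_E)<0$. After a quadratic base change $E$ becomes a Tate curve over the completion $K_\fq$, and the standard description of the Galois action on the $p$-torsion of a Tate curve with residue characteristic $\ell\neq p$ shows that inertia $I_\fq$ acts through a unipotent subgroup generated by a single transvection, which is nontrivial precisely when $p\nmid v_\fq(j_E)$. The quadratic twist needed to pass to the Tate curve contributes only a scalar, so it disappears in $\PGL_2$; thus in that case $\mathbb{P}\brphiE(I_\fq)$ already contains an element of order $p$, contradicting the previous paragraph. Hence potential multiplicative reduction at $\fq$ can only occur when $p\mid v_\fq(j_E)$.

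The main obstacle is the remaining boundary case $v_\fq(j_E)<0$ with $p\mid v_\fq(j_E)$, where inertia acts trivially modulo $p$ and the order-$p$ element disappears. Here I would refine the argument by working with the full decomposition group $D_\fq$ rather than inertia alone. For a Tate curve the image $\mathbb{P}\brphiE(D_\fq)$ lies in the image of a Borel subgroup $B$ (the connected--\'etale filtration of $E[p]$ is $D_\fq$-stable); combined with the constraint that it also lies in $N$, and using the elementary fact $B\cap N=C$, the filtration of $E[p]$ would be forced to split as a $D_\fq$-module. I would then play this splitting against the $\QQ$-curve structure: the hypotheses that the degree $d$ is squarefree and $(p,d)=1$ guarantee that the isogeny $\mu(\sigma)\colon{}^{\sigma}E\to E$ does not interact with the $p$-torsion, so the descent datum and the Tate parameter at $\fq$ are compatible, and the required splitting of the canonical subgroup cannot occur at a prime of genuine potential multiplicative reduction. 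Making this incompatibility precise---tracking the $\QQ$-curve twist at $\fq$ and the interaction of $\mu(\sigma)$ with the canonical subgroup---is the delicate step, and it is where the restrictions $p=11$ or $p>13$ and $(p,d)=1$ are genuinely used, rather than in the coprimality observation of the first paragraph.

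Having excluded both sub-cases, I conclude that no prime $\fq$ of $K$ away from $6$ can be of potential multiplicative reduction. Equivalently, $j_E$ is integral at every such $\fq$, so $E$ has potential good reduction at all primes of $K$ not dividing $6$, as claimed.
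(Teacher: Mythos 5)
Your first two steps are fine: the normalizer of a split Cartan in $\PGL_2(\Fp)$ has order $2(p-1)$, hence contains no element of order $p$, and the Tate-curve description of inertia at a prime $\fq\nmid 6p$ of potential multiplicative reduction produces a transvection of order $p$ unless $p\mid v_\fq(j_E)$. But the proposal breaks down exactly at the case you flag as ``delicate'', and the breakdown is not a matter of missing details: no purely local argument can close it. When $p\mid v_\fq(j_E)$, the mod $p$ representation of the decomposition group at $\fq$ is an extension whose class is the Tate parameter $q$ in $K_\fq^*/(K_\fq^*)^p$, and this extension genuinely splits whenever $q$ is a $p$-th power in $K_\fq^*$ --- which happens for actual Tate curves. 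So the conclusion you hope to contradict (the $D_\fq$-filtration of $E[p]$ splitting, forced by $B\cap N=C$) is a consistent local configuration, and your suggestion that the $\QQ$-curve descent datum $\mu(\sigma)$ obstructs it has no mechanism behind it: an isogeny of degree $d$ prime to $p$ acts invertibly on $E[p]$ and is perfectly compatible with a split filtration. Your guess that the restrictions $p=11$ or $p>13$ enter at this local step is therefore also wrong; nothing in the local picture distinguishes $p=13$ from $p=17$.

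The actual proof (this paper gives none --- the proposition is quoted verbatim from \cite{Ellenberg04}) is global. The $\QQ$-curve hypothesis with $(p,d)=1$ and $d$ squarefree ensures that $\mathbb P\brhoE$ extends to $G_\QQ$, so the split-Cartan hypothesis produces a non-cuspidal \emph{rational} point on the modular curve $X_{\mathrm{split}}(p)$ (equivalently a quotient of $X_0(p^2)$), and one then invokes Momose's theorem, proved by Mazur's formal-immersion method applied to the winding/Eisenstein quotient, to conclude that the $j$-invariant of such a point is integral away from $2$ and $3$. That is where $p=11$ or $p>13$ comes from: the method fails for $p=13$ because the relevant quotient of $J_0(169)$ has positive rank --- which is precisely why the paper must treat the split case at $p=13$ separately in the proof of Proposition \ref{prop:no_solutions_rational_newforms_Ellenberg}, via the argument following Proposition 3.9 of \cite{Ellenberg04}. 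So the missing idea in your proposal is the passage from the Galois-theoretic hypothesis to a rational point on a modular curve and the Mordell--Weil/formal-immersion input there; the local analysis alone, however carefully refined, cannot yield the statement.
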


\begin{prop}[Proposition 3.6 \cite{Ellenberg04}]\label{prop:not_in_nonsplit_normalizer}
Let $K$ be an imaginary quadratic field and $E/K$ a $\QQ$-curve of squarefree  degree $d$. Then there exists a constant $M_{K,d}$ such that if the image of $\mathbb P\brhoE$ lies in the normalizer of a nonsplit Cartan subgroup of $\PGL_2(\Fp)$ and $p>M_{K,d}$ then $E$ has potential good reduction at all primes of $K$.
\end{prop}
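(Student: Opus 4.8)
The plan is to argue by contraposition: I would assume that $E$ fails to have potential good reduction at some prime $\fq$ of $K$ and derive, for $p$ large, a contradiction with the hypothesis that the image of $\mathbb P\brhoE$ lies in the normalizer of a nonsplit Cartan subgroup. Since potential good reduction fails at $\fq$ exactly when $E$ has potential multiplicative reduction there, this amounts to showing that potential multiplicative reduction is incompatible with the nonsplit Cartan hypothesis once $p>M_{K,d}$. The group-theoretic input I would isolate at the outset is that the normalizer $N_{\mathrm{ns}}$ of a nonsplit Cartan subgroup of $\PGL_2(\Fp)$ has order $2(p+1)$, which is prime to $p$; in particular $N_{\mathrm{ns}}$ contains no element of order $p$.

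First I would run an inertia/order argument at each prime $\fq$ of potential multiplicative reduction with $\fq\nmid p$. By the theory of the Tate curve, after the quadratic twist that renders the reduction multiplicative, the restriction of $\brphiE$ to the inertia group $I_\fq$ is unipotent and its projective image is cyclic of order $p/\gcd(p,v_\fq(j(E)))$. Because $I_\fq\subseteq G_K$ and $\mathbb P\brhoE\mid_{G_K}\simeq\mathbb P\brphiE$, this projective image of $I_\fq$ lies inside $\mathbb P\brhoE(G_K)\subseteq N_{\mathrm{ns}}$. Since $N_{\mathrm{ns}}$ has order prime to $p$, the image of $I_\fq$ cannot have order $p$, forcing $p\mid v_\fq(j(E))$. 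Carrying this out at every such prime shows that the polar (denominator) ideal of $j(E)$, away from $p$, is a perfect $p$-th power $\fa^p$, with $\fa\neq(1)$ precisely when a prime of potential multiplicative reduction prime to $p$ exists.

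The crux, and the step I expect to be the main obstacle, is to upgrade the divisibility $p\mid v_\fq(j(E))$ into outright integrality of $j(E)$ once $p$ is large. Here I would exploit the $\QQ$-curve structure: the degree-$d$ isogeny ${}^\sigma E\to E$ means that $j(E)$ and its conjugate $\sigma(j(E))=j({}^\sigma E)$ are joined by the modular relation $\Phi_d(j(E),\sigma(j(E)))=0$, so the pair defines a $K$-point of $X_0(d)$ interchanged with its conjugate by the Atkin--Lehner involution $w_d$, that is, a $\QQ$-point of $X_0(d)/w_d$. Transporting the $p$-th-power-denominator condition through $\sigma$ and through the local behaviour of a $d$-isogeny of Tate curves pins the denominators of both coordinates to be $p$-th powers; a Runge-type height estimate on $X_0(d)$ then shows that a genuinely non-integral such point would have height growing at least linearly in $p$, whereas the height of the point is bounded in terms of $K$ and $d$ alone. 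Taking $M_{K,d}$ to exceed this threshold yields the contradiction, so for $p>M_{K,d}$ the $j$-invariant is integral at every prime and $E$ has potential good reduction everywhere.

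I would remark that this final comparison is the source of the ineffectivity and of the dependence of $M_{K,d}$ on both $K$ and $d$: unlike the split case of Proposition~\ref{prop:not_in_split_normalizer}, where the analogous bookkeeping is effective and one loses only the primes above $6$, here one genuinely needs the archimedean/height input (ultimately a Siegel-- or Faltings-type finiteness statement) to eliminate the surviving non-integral configurations. The remaining primes $\fq\mid p$ and $\fq\mid 6$ would be absorbed into $M_{K,d}$ by the same comparison, which is what allows the stronger conclusion of potential good reduction at \emph{all} primes of $K$.
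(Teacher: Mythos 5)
Your opening inertia argument is sound and is indeed part of the standard setup: at a prime $\fq\nmid p$ of potential multiplicative reduction, the Tate-curve description makes the projective image of $I_\fq$ cyclic of order $p/\gcd(p,v_\fq(j(E)))$, and since the normalizer of a nonsplit Cartan subgroup of $\PGL_2(\Fp)$ has order $2(p+1)$, prime to $p$, one gets $p\mid v_\fq(j(E))$. The genuine gap is the crux step you flagged yourself: the ``Runge-type height estimate'' on $X_0(d)/w_d$ rests on the assertion that the height of the point $(j(E),\sigma(j(E)))$ is bounded in terms of $K$ and $d$ alone. No such bound exists: $E$ ranges over an infinite family (here, Frey curves attached to arbitrary putative solutions), so $j(E)$ has unbounded height, and the condition that denominators be $p$-th powers forces nothing by itself. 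Worse, the mechanism is the wrong one for this case: Runge's method is precisely what succeeds in the \emph{split} Cartan situation (Bilu--Parent), and it is known to fail for the nonsplit Cartan, where the cusp-orbit condition Runge requires is violated. Your closing remark that the comparison is ineffective (Siegel/Faltings input) would also be fatal downstream: the paper needs an explicit, checkable $M_{K,d}$, since Proposition \ref{prop:p_newform} verifies the relevant criterion computationally for $11\leq p<\bound$ and invokes \cite{DieulefaitUrroz09} for $p\geq\bound$; an ineffective constant would make that verification impossible.

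What actually closes the argument in \cite{Ellenberg04} --- and what this paper records immediately after the statement --- is modular Mordell--Weil input rather than a height estimate. The nonsplit Cartan hypothesis places the $\QQ$-curve on a twisted modular curve whose Jacobian is related (via Chen's isogeny in the classical picture) to $p$-new quotients $A_f$ of $J_0(dp^2)$ attached to newforms $f$ with $w_pf=f$ and $w_df=-f$ (or level $d^\prime p^2$ with $d^\prime\mid d$ proper and $w_pf=f$); twisting by the quadratic character $\chi_K$ of $K$, the point lands in $A_{f\otimes\chi_K}(\QQ)$, which is finite once $L(f\otimes\chi_K,1)\neq 0$ by Kolyvagin--Logachev. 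A Mazur--Momose style reduction argument then excludes potential multiplicative reduction, and the constant $M_{K,d}$ is exactly the threshold beyond which such an $f$ with nonvanishing twisted $L$-value exists --- that is, a lower bound for the primes for which Proposition \ref{prop:newform_finite_Af} holds, produced by an effective analytic estimate of the averages $(a_1,L_\chi)_{p^2}^{p-\text{new}}$, not by a height comparison on $X_0(d)/w_d$.
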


The constant $M_{K,d}$ can be chosen to be a lower bound of the primes Proposition \ref{prop:newform_finite_Af} holds.

\begin{prop}[Proposition 3.9 \cite{Ellenberg04}]\label{prop:newform_finite_Af}
Let $K$ be an imaginary quadratic field and $\chi_K$ be the associate Dirichlet character. Then for all but finitely many primes $p$, there exists a weight $2$ cusp form $f$, which is either
\begin{itemize}
\item a newform in $S_2(\Gamma(dp^2))$ with $w_pf=f$ and $w_df=-f$,
\item a newform in $S_2(\Gamma(d^\prime p^2))$ with $d^\prime$ a proper divisor of $d$ and $w_pf=f$
\end{itemize}
such that $A_{f\otimes\chi }(\QQ)$ is a finite group.
\end{prop}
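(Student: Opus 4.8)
The plan is to first produce the newform $f$ and pin down its shape, then convert the finiteness of $A_{f\otimes\chi}(\QQ)$ into an analytic nonvanishing statement. The forms relevant here are those whose mod-$p$ representation is projectively dihedral---induced from a character of the index-two subgroup cut out by a quadratic field---since these are exactly the ones carrying the normalizer-of-nonsplit-Cartan image of Proposition \ref{prop:not_in_nonsplit_normalizer}. For such a representation the restriction to inertia at $p$ is irreducible, which forces the conductor exponent at $p$ to equal $2$ and fixes a supercuspidal type there; by Serre's conjecture \cite{KhareWintenberger09a,KhareWintenberger09b,Kisin09} it is modular, and a local analysis at $p$ and at the primes dividing the squarefree degree $d$ identifies $f$ as a newform of level $dp^2$ (or $d^\prime p^2$ for a proper divisor $d^\prime$) with Atkin-Lehner eigenvalues $w_pf=f$ and $w_df=-f$, as claimed. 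The dihedral structure moreover exhibits $f$ as a theta series attached to a Hecke character, which is what makes the $L$-function below a Hecke $L$-function.

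Next I would attach to $f\otimes\chi$, through the Eichler-Shimura construction, the $\GL_2$-type abelian variety $A_{f\otimes\chi}$, and recall that the rank of $A_{f\otimes\chi}(\QQ)$ is governed by the order of vanishing at the central point $s=1$ of $\prod_\sigma L(f^\sigma\otimes\chi,s)$, the product being taken over the Galois conjugates of $f$. By the Gross-Zagier formula together with Kolyvagin's Euler system argument, analytic rank zero implies algebraic rank zero; hence it suffices to prove that the central value $L(f\otimes\chi,1)$ is nonzero for all but finitely many primes $p$.

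The whole proposition therefore reduces to a nonvanishing statement, and this is the step I expect to be the main obstacle. Two things must be checked. First, the sign of the functional equation of $L(f\otimes\chi,s)$ must be $+1$, so that the central value is not forced to vanish; I would verify this from $w_pf=f$, the behavior of the local root numbers under twisting by $\chi=\chi_K$, and the conductor data computed in the first step. Second, and more seriously, the value must actually be nonzero once $p$ is large. Since $f$ is a theta series, $L(f\otimes\chi,s)$ is, up to conjugates, a Hecke $L$-function whose conductor grows like $p^2$, and here I would invoke a Rohrlich-type nonvanishing theorem for central values of Hecke $L$-functions varying in a family, which supplies nonvanishing for all but finitely many $p$; absent the CM structure one would instead need a level-aspect nonvanishing result proved by a mollified second-moment estimate. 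Establishing this uniform-in-$p$ nonvanishing, rather than the formal modularity and rank-zero reductions, is the crux of the argument.
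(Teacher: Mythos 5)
There is a genuine gap, and it is in your very first step: you have misidentified what the form $f$ in Proposition \ref{prop:newform_finite_Af} is. The proposition is a pure existence statement about the family of \emph{all} weight-$2$ newforms of level $dp^2$ (restricted to the appropriate Atkin--Lehner eigenspaces); $f$ is not attached to any Frey curve, is not required to be congruent to a dihedral mod-$p$ representation, and in general is \emph{not} a CM theta series. Your plan to produce $f$ by applying Serre's conjecture to a projectively dihedral mod-$p$ representation cannot work even in principle for the stated conclusion: a mod-$p$ congruence constrains $a_\ell(f)$ modulo a prime above $p$ but gives no control whatsoever over the complex central value $L(f\otimes\chi,1)$, which is what finiteness of $A_{f\otimes\chi}(\QQ)$ requires. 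Worse, if $f$ really did have CM by $K$, then $f\otimes\chi_K\simeq f$, so the twist degenerates and the intended structure of the argument collapses; the dihedral forms are the ones the method must \emph{avoid}, not the ones it selects. Your middle step (Eichler--Shimura, root number $+1$ from the Atkin--Lehner conditions, and Kolyvagin--Logachev to pass from $L(f\otimes\chi,1)\neq 0$ to finiteness of $A_{f\otimes\chi}(\QQ)$) is the correct skeleton and matches Ellenberg's reduction.

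The actual mechanism, as explained in \cite{Ellenberg04} and as this paper uses it, is an \emph{averaging} argument over the whole family: one shows the weighted first moment $(a_1,L_\chi)_{p^2}^{p-\text{new}}=\sum_f a_1(f)L_\chi(f)$, taken over a Petersson-orthonormal basis and estimated via the Petersson trace formula, is nonzero for $p$ large; nonvanishing of the average forces some individual newform in the family to satisfy $L(f\otimes\chi,1)\neq 0$, and Kolyvagin--Logachev finishes. No mollification and no second moment are needed, precisely because the harmonic first moment with the $a_1$ weight already has a dominant main term; forms with root number $-1$ contribute zero and cause no cancellation. In the present paper the required positivity is imported from \cite[Lemma 8]{DieulefaitUrroz09} for $p\geq\bound$ and verified computationally for $11\leq p<\bound$ (Proposition \ref{prop:p_newform}). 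So while you correctly flagged the nonvanishing as the crux, the tools you propose for it (Rohrlich's theorem, which applies only in the CM case you wrongly forced, or a mollified second moment) are not the route, and the construction of $f$ that precedes it is not repairable as written.
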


The reasons why Proposition \ref{prop:newform_finite_Af} implies Proposition \ref{prop:not_in_nonsplit_normalizer} are explained in \cite[p. 775]{Ellenberg04}. Before we show how we can prove when Proposition \ref{prop:newform_finite_Af} holds we need to introduce some notation.

Let $f$ be a modular form with $q$-expansion
\begin{equation}
f=\sum_{m=0}^\infty a_m(f)q^n.
\end{equation}
We define $L_\chi(f):=L(f\otimes\chi,1)$ where $\chi$ is a Dirichlet character. We can think $a_m$ and $L_\chi$ as linear functions in the space of modular forms.

Moreover, we denote by $\mathcal F$ a Petersson-orthonormal basis for $S_2(\Gamma_0(N))$ and we define
\begin{equation}
(a_m,L_\chi)_N := \sum_{f\in\mathcal F}a_m(f)L_\chi(f)
\end{equation}
For $M\mid N$ we denote by $(a_m,L_\chi)_N^M$ the contribution to $(a_m,L_\chi)_N$ of the forms which are new at level $M$. We also define
\begin{equation}
(a_m,L_\chi)_{p^2}^{p-\text{new}} := (a_m,L_\chi)_{p^2}-(a_m,L_\chi)_{p^2}^p.
\end{equation}
In \cite{Ellenberg04} it is explained that Proposition \ref{prop:newform_finite_Af} holds as long as $|(a_1,L_\chi)_{p^2}^{p-\text{new}} |>0$.

In our case we have $d = 3$, $\chi_{-3}=\left(\frac{-3}{n}\right)$ and $q=3$. So we have the following.

\begin{prop}\label{prop:p_newform}
Let $p\geq 11$ be a prime. Then there exists a newform $f\in S_2(\Gamma_0(p^2))$ such that $w_pf=f$ and $L(f\otimes \chi_{-3})\neq 0$.
\end{prop}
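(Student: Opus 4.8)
The plan is to verify the positivity criterion quoted just above, namely that $|(a_1,L_{\chi_{-3}})_{p^2}^{p-\text{new}}|>0$ for every prime $p\geq 11$. Once this is established, the nonvanishing of the sum $(a_1,L_{\chi_{-3}})_{p^2}^{p-\text{new}}=\sum_f a_1(f)L_{\chi_{-3}}(f)$, taken over a Petersson-orthonormal basis of the $p$-new subspace, forces at least one summand to be nonzero. The corresponding $f$ is then a newform of level $p^2$ with $w_pf=f$ (this is exactly the type isolated by the $p$-new projection in Proposition \ref{prop:newform_finite_Af} with $d=3$, $d'=1$) and with $L(f\otimes\chi_{-3})=L_{\chi_{-3}}(f)\neq 0$, which is the assertion.

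First I would compute the full-level pairing $(a_1,L_{\chi_{-3}})_{p^2}$ by the standard unfolding / moment method. Writing $L(f\otimes\chi_{-3},1)$ through its Dirichlet series (via the approximate functional equation, or directly as a Hecke integral against a theta/Eisenstein kernel) expresses $(a_1,L_{\chi_{-3}})_{p^2}$ as $\sum_{n\geq 1}\frac{\chi_{-3}(n)}{n}V(n)\sum_f a_1(f)a_n(f)$ for an explicit weight function $V$. Inserting the Petersson formula for weight $2$ and level $p^2$, the inner sum over $f$ splits into the diagonal term $\delta_{1n}$ and a sum of Kloosterman sums $S(1,n;c)$ with $c\equiv 0\pmod{p^2}$ weighted by Bessel factors $J_1(4\pi\sqrt{n}/c)$. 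The diagonal $n=1$ contributes $\chi_{-3}(1)V(1)$, a fixed positive constant, which is the main term.

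Then I would bound the off-diagonal contribution. Since every modulus satisfies $c\geq p^2$ and the Bessel factor is $O(\sqrt{n}/c)$ for small argument, Weil's bound $|S(1,n;c)|\ll c^{1/2+\varepsilon}$ together with the rapid decay of $V$ shows that the total off-diagonal mass is $O(p^{-1+\varepsilon})$, hence strictly smaller than the main term once $p$ is large. This gives $(a_1,L_{\chi_{-3}})_{p^2}>0$ for $p$ beyond an effective bound. To pass to the $p$-new part I would use the defining relation $(a_1,L_{\chi_{-3}})_{p^2}^{p-\text{new}}=(a_1,L_{\chi_{-3}})_{p^2}-(a_1,L_{\chi_{-3}})_{p^2}^{p}$, expressing the level-$p$-new piece $(a_1,L_{\chi_{-3}})_{p^2}^{p}$ in terms of the lower-level pairing $(a_1,L_{\chi_{-3}})_{p}$ via the two degeneracy maps $S_2(\Gamma_0(p))\hookrightarrow S_2(\Gamma_0(p^2))$ and the Atkin-Lehner normalization; the same moment estimate shows this piece does not overwhelm the main term, so the $p$-new pairing stays positive.

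The main obstacle is making all of this \emph{effective and uniform down to $p=11$}: the analytic argument only delivers positivity for $p$ above some explicit threshold, so the error bounds must be kept completely explicit (sharp constants in the Weil and Bessel estimates and in the degeneracy bookkeeping), and the finitely many residual primes $p=11,13,\ldots$ below the threshold must be settled by direct machine computation---listing the newforms of level $p^2$ with $w_pf=f$ and checking numerically that at least one has $L(f\otimes\chi_{-3},1)\neq 0$. The delicate part of the bookkeeping is tracking the Atkin-Lehner $w_p$-eigenvalue correctly through the level-sieving, so that the surviving forms are exactly those of the type required by Proposition \ref{prop:newform_finite_Af}.
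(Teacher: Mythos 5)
Your plan has the same two-part structure as the paper's actual proof: an effective analytic positivity statement $|(a_1,L_{\chi_{-3}})_{p^2}^{p-\text{new}}|>0$ for all $p$ above an explicit threshold, plus a machine check for the finitely many remaining primes. The only structural difference is that the paper does not carry out the Petersson-formula moment computation at all: it quotes \cite[Lemma 8]{DieulefaitUrroz09}, where precisely the argument you sketch (diagonal main term, Weil-bounded Kloosterman/Bessel off-diagonal terms with moduli $c\equiv 0 \bmod p^2$, removal of the level-$p$ old contribution via the degeneracy maps) is made effective with threshold $p\geq 137$, and then verifies the range $11\leq p<137$ with a Magma script (\textit{NewformTwist.m}). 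So your proposal amounts to re-proving the cited lemma from scratch; what the citation buys is exactly the explicit constants that you correctly identify as the main obstacle, and your fallback for small primes coincides with the paper's.

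One step in your write-up is wrong as stated and needs the standard fix. The $p$-new projection does \emph{not} isolate the forms with $w_pf=f$: the new subspace of $S_2(\Gamma_0(p^2))$ contains newforms with both $w_p$-eigenvalues, so nonvanishing of the pairing only yields some newform $f$ of level $p^2$ with $a_1(f)L(f\otimes\chi_{-3},1)\neq 0$. The reason such an $f$ automatically satisfies $w_pf=f$ is the root number: for a weight-$2$ newform of level $p^2$ and a primitive quadratic twist of conductor $3$ prime to $p$, the sign of the functional equation of $L(f\otimes\chi_{-3},s)$ is $\chi_{-3}(-p^2)$ times the sign for $L(f,s)$, and the latter is $-\lambda$ where $w_pf=\lambda f$. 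Since $\chi_{-3}(-p^2)=\chi_{-3}(-1)=-1$, the twisted sign equals $\lambda$; hence every newform with $w_pf=-f$ has forced central vanishing $L(f\otimes\chi_{-3},1)=0$ and contributes nothing to the sum, so the surviving form has $w_pf=f$. With this substitution for your parenthetical claim (which is the argument in \cite{Ellenberg04} behind Proposition \ref{prop:newform_finite_Af}), your outline becomes a correct, if laborious, re-derivation of the paper's proof.
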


\begin{proof}
In \cite[Lemma8]{DieulefaitUrroz09} the authors prove that $|(a_1,L_\chi)^{p-\text{new}}|>0$ for $p\geq\bound$. For $p < \bound$ we have written a Magma program which proves that the same it true for $11\leq p < \bound$ (see \textit{NewformTwist.m}).
\end{proof}

\begin{prop}\label{prop:no_solutions_rational_newforms_Ellenberg}
Let $p\geq 11$ be a prime. Then primitive solutions of \eqref{eq:main} do not arise from a rational newform $f\in S_2(\Gamma_0(972))$.
\end{prop}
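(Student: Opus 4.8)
The plan is to argue by contradiction. Suppose a primitive solution $(a,b,c)$ of \eqref{eq:main} gives rise, through the isomorphism $\brhoE\simeq\brhof$, to one of the four rational newforms $f\in S_2(\Gamma_0(972))$. Each such $f$ has complex multiplication by $K=\QQ(\sqth)$, so $\brhof$ is induced from a character of $G_K$; in particular $\brhof|_{G_K}$ is reducible and the projective image $\mathbb P\brhoE(\GQ)=\mathbb P\brhof(\GQ)$ lies in the normalizer of a Cartan subgroup of $\PGL_2(\Fp)$. Which Cartan occurs is governed by the splitting of $p$ in $K$: the subgroup is split when $\left(\frac{-3}{p}\right)=1$ and nonsplit when $\left(\frac{-3}{p}\right)=-1$. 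Recall from Section \ref{sec:Frey_curve} that $E$ is a $\QQ$-curve of squarefree degree $d=3$, and that $(p,d)=1$ since $p\geq 11$ forces $p\neq 3$, so Ellenberg's propositions are available.

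First I would treat the nonsplit case $\left(\frac{-3}{p}\right)=-1$. By Proposition \ref{prop:p_newform} the nonvanishing $|(a_1,L_\chi)_{p^2}^{p-\text{new}}|>0$ holds for every prime $p\geq 11$, so Proposition \ref{prop:newform_finite_Af} is valid for all such $p$ and the threshold $M_{K,d}$ of Proposition \ref{prop:not_in_nonsplit_normalizer} may be chosen so that Proposition \ref{prop:not_in_nonsplit_normalizer} applies to every prime $p\geq 11$. Hence $E$ acquires potentially good reduction at every prime of $K$. In the split case $\left(\frac{-3}{p}\right)=1$ I would instead invoke Proposition \ref{prop:not_in_split_normalizer} with $d=3$: for $p=11$ or $p>13$ it forces $E$ to have potentially good reduction at every prime of $K$ not dividing $6$.

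Next I would extract the contradiction from the reduction data of Section \ref{sec:Frey_curve}. By Lemma \ref{lem:conductor_E3} the conductor of $E$ equals $\fp_2^2\cdot\fp_3^8\cdot\prod_{\fp\mid c}\fp$, so $E$ has multiplicative reduction at every prime $\fp$ of $K$ dividing $c$ and therefore does not have potentially good reduction there. By Lemma \ref{lem:2_3_nmid_c} the integer $c$ has a prime divisor $\ell>3$, and any prime $\fp$ of $K$ above $\ell$ divides $c$ while satisfying $\fp\nmid 6$. This prime contradicts the potentially good reduction obtained above, in the nonsplit case (good reduction everywhere) and in the split case (good reduction away from $6$) alike. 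Consequently no primitive solution can arise from a rational newform for these primes.

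The hard part is the single residual value $p=13$. Since $13\equiv 1\pmod 3$ we have $\left(\frac{-3}{13}\right)=1$, so $p=13$ falls into the split Cartan situation, which is exactly the case excluded by Proposition \ref{prop:not_in_split_normalizer}. I expect this to be the main obstacle: Ellenberg's split-Cartan bound is unavailable, so one must rule out a split Cartan image at $p=13$ directly, either through the analysis of the modular curve underlying the $p=13$ irreducibility argument (the one based on $\QQ$-rational points of $X_0(39)/w_3$) or by an explicit computation against the candidate CM newforms. Once $p=13$ is disposed of, the dichotomy above accounts for every prime $p\geq 11$, completing the proof.
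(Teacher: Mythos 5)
Your treatment of $p\neq 13$ matches the paper's proof essentially step for step: the rational newforms have CM by $\QQ(\sqth)$, so $\mathbb P\brhoE\simeq\mathbb P\brhof$ lands in the normalizer of a Cartan subgroup of $\PGL_2(\Fp)$; Proposition \ref{prop:p_newform} validates Proposition \ref{prop:newform_finite_Af} for all $p\geq 11$, so Propositions \ref{prop:not_in_split_normalizer} and \ref{prop:not_in_nonsplit_normalizer} force potentially good reduction away from $6$; and Lemmas \ref{lem:conductor_E3} and \ref{lem:2_3_nmid_c} produce a prime of $K$ above some $\ell>3$ dividing $c$ at which $E$ has multiplicative reduction, giving the contradiction. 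Your explicit remark that the split/nonsplit dichotomy is governed by $\left(\frac{-3}{p}\right)$ is correct and slightly more detailed than the paper, which simply applies both propositions.

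The genuine gap is your handling of $p=13$. You correctly isolate it as the split-Cartan case not covered by Proposition \ref{prop:not_in_split_normalizer}, but you then leave it unresolved, gesturing at either the modular curve behind the $p=13$ irreducibility argument or an unspecified computation against the CM forms. The first suggestion is misdirected: the $X_0(39)/w_3$ argument (via \cite{BennettChen12} and \cite{Kenku79}) concerns \emph{reducibility} of $\brhoE$, i.e.\ a Borel image, which is a different subgroup and a different modular curve from the split-Cartan normalizer; determining rational points on split-Cartan modular curves at $p=13$ is, in general, a famously hard problem, so "analyzing the curve directly" is not a viable fallback. The paper closes the case with machinery you already have in hand: the argument following \cite[Proposition 3.9]{Ellenberg04} — nonvanishing of $L(f\otimes\chi_{-3},1)$ for a suitable $p$-new form at level $p^2$ forcing finiteness of $A_{f\otimes\chi}(\QQ)$ and hence excluding the Cartan-normalizer image — works for the \emph{split} case as well as the nonsplit one (see \cite[Proposition 6]{BennettEllenbergNg10}), and Proposition \ref{prop:p_newform} supplies the required nonvanishing for $p=13$ since it holds for all $p\geq 11$. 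Replacing your open-ended suggestions with this observation would complete the proof.
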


\begin{proof}
Let $f$ be a rational newform of $S_2(\Gamma_0(972))$. Then we know that $f$ has complex multiplication and so the image of $\brhof$ lies in the normalizer of a Cartan group. Because of Lemma \ref{lem:2_3_nmid_c} there exists a prime in $K$ not above $6$ such that $E$ does not has potential good reduction. Because of Propositions \ref{prop:not_in_split_normalizer}, \ref{prop:not_in_nonsplit_normalizer} and \ref{prop:p_newform} we have that $\brhoE$ does not lie in the normalizer of a Cartan group for $p\neq 13$. However, this is a contradiction to the fact that $\brhoE\simeq\brhof$.

For $p=13$ we have problem only for the split case which we can not exclude using Proposition \ref{prop:not_in_split_normalizer}. However, the argument following \cite[Proposition 3.9]{Ellenberg04} also works for the split case (see also \cite[Proposition 6]{BennettEllenbergNg10}). So, from Proposition \ref{prop:p_newform} we have the result.
\end{proof}

\section{Solutions for the remaining small exponents}\label{sec:small_exponents}

In this final section we finish the proof of Theorem \ref{thm:main} proving that \eqref{eq:main} has no primitive solutions for $n=3,4,5,7$. We need the following lemma.

\begin{lem}\label{lem:parametrization_for_p}
Let $p\geq 5$ an odd prime and $x,y,z$ pairwise coprime integers such that $x^2 +3y^2 = z^p$. We define 
\begin{align}
f_1(u,v) & = \sum_{i=0}^{\frac{p-1}{2}}\binom{p}{2i+1}(-3)^{\frac{p-1}{2}-i}u^{2i+1}v^{p-1-2i}\\
f_2(u,v) & = \sum_{i=0}^{\frac{p-1}{2}}\binom{p}{2i}(-3)^{\frac{p-1}{2}-i}u^{2i}v^{p-2i}
\end{align}
Then there exist integers $u_0,v_0$ with $(u_0,v_0)=1$ such that $x = f_1(u_0,v_0)$, $y = f_2(u_0,v_0)$ and $z = u_0^2 + 3v_0^2$.
\end{lem}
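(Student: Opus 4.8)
The plan is to factor the left-hand side of $x^2+3y^2=z^p$ in the ring of Eisenstein integers $\OK=\ZZ[\omega]$, where $\omega=\tfrac{-1+\sqth}{2}$, and to exploit that $\OK$ is a principal ideal domain whose unit group is the group $\{\pm1,\pm\omega,\pm\omega^2\}$ of sixth roots of unity. First I would record two elementary divisibility facts coming from pairwise coprimality of $x,y,z$: that $3\nmid z$ (otherwise $3\mid x$, contradicting $\gcd(x,z)=1$) and that $2\nmid z$ (reducing $x^2+3y^2$ modulo $8$ and using $p\geq 3$, one checks that $x^2+3y^2\not\equiv 0\bmod 8$ whenever $x,y$ are not both even). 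Hence $z$ is coprime to $6$.

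Next I would set $\alpha=x+y\sqth$, so that $\alpha\bar\alpha=z^p$, and show that $\alpha$ and $\bar\alpha$ are coprime in $\OK$. Any common prime divisor $\pi$ divides both $\alpha+\bar\alpha=2x$ and $\alpha-\bar\alpha=2y\sqth$. The ramified prime $\sqth$ above $3$ and the inert prime $2$ are excluded, since $\Norm_{K/\QQ}$ shows they cannot divide $z^p=\alpha\bar\alpha$; any remaining $\pi$ would then divide both $x$ and $y$, contradicting $\gcd(x,y)=1$. By unique factorization in $\OK$ we may therefore write $\alpha=\eta\,\delta^p$ for a unit $\eta$ and some $\delta\in\OK$. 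Since $\gcd(p,6)=1$, raising to the $p$-th power permutes the sixth roots of unity, so $\eta$ is itself a $p$-th power; absorbing it into $\delta$ gives $\alpha=\delta^p$.

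The \textbf{main obstacle} is to show that $\delta$ actually lies in the suborder $\ZZ[\sqth]$, i.e.\ that $\delta=u_0+v_0\sqth$ with $u_0,v_0\in\ZZ$ rather than half-integers; this is exactly where coprimality to $2$ and $\gcd(p,6)=1$ are used. I would argue modulo $2$. Reduction gives $\OK/2\OK\cong\mathbb{F}_4$, and since $\sqth\equiv 1\bmod 2\OK$ the subring $\ZZ[\sqth]$ maps onto the prime subfield $\mathbb{F}_2$. As $\alpha\in\ZZ[\sqth]$ is prime to $2$, its reduction is $\bar\alpha=1\in\mathbb{F}_2^\times$. Writing $\bar\delta\in\mathbb{F}_4^\times$ (nonzero because $\alpha$ is prime to $2$), the relation $\bar\delta^{\,p}=\bar\alpha=1$ together with the fact that $t\mapsto t^p$ is a bijection of $\mathbb{F}_4^\times$ (as $p$ is prime to $3$) forces $\bar\delta=1$. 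This makes the $\omega$-coordinate of $\delta$ even, which is equivalent to $\delta\in\ZZ[\sqth]$.

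Finally, writing $\delta=u_0+v_0\sqth$, the coprimality $\gcd(u_0,v_0)=1$ follows because a common factor would divide both $x$ and $y$ in $\delta^p=x+y\sqth$. Expanding $(u_0+v_0\sqth)^p$ by the binomial theorem, separating even and odd powers of $\sqth$ via $(\sqth)^{2i}=(-3)^i$, and reindexing the summation by $i\mapsto\tfrac{p-1}{2}-i$, one identifies the rational part with $f_1(u_0,v_0)$, so $x=f_1(u_0,v_0)$, and the $\sqth$-part with $f_2(u_0,v_0)$, so $y=f_2(u_0,v_0)$; and $z=\Norm_{K/\QQ}(\delta)=u_0^2+3v_0^2$. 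This completes the plan.
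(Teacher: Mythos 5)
Your proof is correct and follows exactly the route the paper indicates: the paper's entire proof is the single sentence that the lemma ``is a consequence of factoring $x^2+3y^2=z^p$ over the ring of integers of $\QQ(\sqth)$,'' and your argument is precisely that factorization carried out in full. The details you supply --- coprimality of $x+y\sqth$ with its conjugate, absorption of the unit via $\gcd(p,6)=1$, and the mod-$2$ descent from $\ZZ[\omega]$ to the suborder $\ZZ[\sqth]$ --- are all sound and are exactly what the paper leaves to the reader.
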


\begin{proof}
This is a consequence of factoring $x^2 +3y^2 = z^p$ over the ring of integers of $\QQ(\sqth)$.
\end{proof}

\subsection{Case $n=3$:}
Let assume $n=3$, then a solution with $b\neq 0$ corresponds to a rational point of the elliptic curve $E:y^2 = x^3-3$ via the equation
\begin{equation}
\left(\frac{a}{b^3}\right)^2 = \left(\frac{c}{b^2}\right)^3-3.
\end{equation}
The curve $E$ is Cremona's label 972B1 with trivial Mordell-Weil group \cite{Cremona97}.

\subsection{Case $n=4$:}
Let assume that $n=4$. We know that $2\mid b$. For the parametrization of the conic $X^2 + 3Y^2=1$ we have that there exist coprime $x,y\in\ZZ$ such that
\begin{align}
\left\{\begin{aligned}
\frac{a}{c^2}=&\frac{3x^2-y^2}{3x^2+y^2}\\
\frac{b^3}{c^2}=&\frac{-2xy}{3x^2+y^2}
\end{aligned}\right.
\end{align}

Because $a,c$ are odd we understand that there exists $k\geq 0$ such that
\begin{align}
\left\{\begin{aligned}
a &= \frac{3x^2-y^2}{2^k}\\
c^2 & = \frac{3x^2+y^2}{2^k}\\
b^3 & = \frac{-2xy}{2^k}
\end{aligned}\right.
\end{align}

\begin{lem}\label{lem:k_zero}
Let $a,b,c,x,y$ as above. Then $k=0$.
\end{lem}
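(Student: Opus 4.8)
We are given the $n=4$ case, where a primitive solution $(a,b,c)$ with $2\mid b$ produces coprime integers $x,y$ and an integer $k\ge 0$ satisfying
\[
a = \frac{3x^2-y^2}{2^k},\qquad c^2 = \frac{3x^2+y^2}{2^k},\qquad b^3 = \frac{-2xy}{2^k}.
\]
The claim is that $k=0$. The plan is to extract the exact power of $2$ dividing each of the three expressions $3x^2-y^2$, $3x^2+y^2$, and $-2xy$, and show that consistency with $a,c$ odd and $2\mid b$ forces $2^k$ to divide all three with $k=0$.

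**Parity analysis of $x,y$.** First I would pin down the parities of $x$ and $y$ from the parametrization. Since $a$ is odd, $3x^2-y^2$ must be odd after dividing by $2^k$; likewise $c^2$ is odd, so $3x^2+y^2$ is odd after dividing by $2^k$. I would argue that $x$ and $y$ cannot both be even (they are coprime) and cannot both be odd: if both were odd then $3x^2-y^2\equiv 3-1=2\pmod 4$ and $3x^2+y^2\equiv 3+1=4\equiv 0\pmod 4$, so $v_2(3x^2-y^2)=1$ while $v_2(3x^2+y^2)\ge 2$, and these two valuations are unequal. But $a$ and $c$ share the same denominator $2^k$ and are both odd, forcing $v_2(3x^2-y^2)=v_2(3x^2+y^2)=k$; the inequality of valuations then rules out both $x,y$ odd. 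Hence exactly one of $x,y$ is even and the other is odd, so both $3x^2-y^2$ and $3x^2+y^2$ are \emph{odd}.

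**Concluding $k=0$.** With $3x^2\pm y^2$ odd, the equations $2^k a = 3x^2-y^2$ and $2^k c^2 = 3x^2+y^2$ with $a,c$ odd immediately give $k=0$: the right-hand sides are odd, so $2^k$ divides an odd number, forcing $k=0$. The only remaining point to check is internal consistency with the third equation $b^3 = -2xy/2^k = -2xy$ (once $k=0$): since exactly one of $x,y$ is even, $-2xy$ is divisible by $2$ but I would verify the $2$-adic valuation matches $3\cdot v_2(b)$, which is automatic because $b$ is even and this equation merely determines $b$ rather than constraining $k$.

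**Main obstacle.** The genuinely delicate step is the parity bookkeeping that excludes $x,y$ both odd: one must use that $a$ and $c$ carry the \emph{same} denominator $2^k$, so their numerators have equal $2$-adic valuation, and then exhibit the contradiction $v_2(3x^2-y^2)=1\neq v_2(3x^2+y^2)$ in the both-odd case. Everything else is a short valuation computation, but this comparison of the two numerators is where the argument actually has content.
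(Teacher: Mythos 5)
Your proof is correct and rests on the same computation as the paper's: comparing $3x^2-y^2\equiv 2\pmod 4$ with $3x^2+y^2\equiv 0\pmod 4$ when $x,y$ are both odd, against the fact that $a$ and $c$ are odd. The paper organizes it contrapositively (assume $k>0$, deduce $x,y$ odd, hence $k=1$, hence $2\mid c$, contradiction) while you rule out the both-odd case first and conclude $k=0$ directly, but this is only a cosmetic rearrangement of the same argument.
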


\begin{proof}
Let assume that $k>0$. Because $a$ is odd we have that $x,y$ are odd too. Since $3x^2-y^2\equiv 2\mod 4$ we have that $k=1$. Then $3x^2+y^2\equiv 0\mod 4$ and so $2\mid c$ which is a contradiction.
\end{proof}

Because $c$ is odd and Lemma \ref{lem:k_zero} we have that $2\nmid y$. So, we conclude that there are coprime integers $b_1,b_2$ such that $x=4b_1^3$ and $y=b_2^3$. Thus we have that 
\begin{equation}
c^2 = 48b_1^6 + b_2^6
\end{equation}
Because $b_1\neq 0$ the point $(\frac{b_2}{b_1},\frac{c}{b_1^3})$ is a rational point on the genus $2$ curve
\begin{equation}
C:Y^2 = X^6 + 48
\end{equation}
Unfortunately, the Jacobian of $C$ has rank $2$ and classical Chabauty method does not work. However, $C$ is bielliptic and we are able to apply the ideas in \cite{FlynnWetherell99}. In the Appendix \ref{sec:appendix} we prove the following

\begin{prop}\label{prop:covering_chabauty}
The set of rational points of $C$ is $C(\QQ)=\left\lbrace\infty^{\pm},(\pm 1,\pm 7)\right\rbrace$.
\end{prop}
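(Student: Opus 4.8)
The plan is to determine all rational points on the genus $2$ curve $C:Y^2=X^6+48$ by exploiting its two independent elliptic quotients rather than by a direct Chabauty argument, since the Jacobian has rank $2$ equal to the genus, so ordinary Chabauty is unavailable. First I would write down the two maps to elliptic curves arising from the hyperelliptic involution composed with $X\mapsto -X$: setting $W=X^2$ gives a point on $E_1:Y^2=W^3+48$, and setting $U=1/X^2$, $V=Y/X^3$ gives a point on $E_2:V^2=U^3+48^{-1}\cdot(\dots)$, more cleanly $E_2:V^2=1+48U^3$. I would compute the Mordell--Weil groups $E_i(\QQ)$ and confirm that each has rank $1$, so that classical Chabauty applies on each quotient after a suitable base change; this is the Flynn--Wetherell elliptic-curve Chabauty strategy of \cite{FlynnWetherell99}.

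The key steps, in order, are as follows. First I would identify $E_1$ and $E_2$ with curves in Cremona's tables and compute generators of their Mordell--Weil groups together with their ranks over $\QQ$. Second, because a rational point on $C$ requires $X^2$ rational and the fibre product condition is better understood over the field where $X$ itself lives, I would pass to the number field $L=\QQ(\sqrt{\phantom{x}})$ generated by a square root governing the covering, and realise $X$ as coordinate of an $L$-rational point on one elliptic curve whose $\QQ$-rational image is controlled. Third, I would apply elliptic-curve Chabauty (the method of Bruin, implemented in \textbf{Magma}) to the relevant curve over $L$, using that the rank of the Mordell--Weil group over $L$ is strictly less than $[L:\QQ]$, to conclude that the $X$-coordinates of points of $C(\QQ)$ lie in an explicitly computable finite set. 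Fourth, for each candidate $X$-value I would check whether $X^6+48$ is a rational square and thereby recover the finite list $\{\infty^{\pm},(\pm1,\pm7)\}$.

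The main obstacle I anticipate is the rank computation and the verification that the elliptic-curve Chabauty hypothesis (Mordell--Weil rank over $L$ strictly below the degree $[L:\QQ]$) is actually satisfied: if the rank over $L$ is too large, the method fails and one must either descend further or find an auxiliary prime to bound the $p$-adic points. A secondary difficulty is making the descent maps and the bijection between $C(\QQ)$ and the union of contributions from the two elliptic quotients completely explicit, since one must track the finitely many twists arising in the $2$-descent and rule out all but the trivial ones. Once the correct twist and field $L$ are pinned down, the finiteness and enumeration follow from a finite \textbf{Magma} computation, and matching the surviving points against the squareness condition yields exactly $C(\QQ)=\left\lbrace\infty^{\pm},(\pm1,\pm7)\right\rbrace$, which I would record in Appendix \ref{sec:appendix}.
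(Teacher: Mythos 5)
Your overall strategy---exploit the bielliptic structure and finish with Flynn--Wetherell plus Bruin's elliptic-curve Chabauty---is indeed the strategy of the paper's appendix, but the proposal breaks down at the central step, in two places. First, the claim that the rank-$1$ quotients $y^2=x^3+48$ and (after scaling your $V^2=1+48U^3$) $y^2=x^3+2304$ permit ``classical Chabauty after a suitable base change'' is false: classical Chabauty requires the Mordell--Weil rank to be strictly less than the genus, which for an elliptic quotient means rank $0$, and base change can only increase the rank. A rank-$1$ elliptic quotient gives no finiteness statement by itself; that is precisely why a genuine descent step, producing new curves over a bigger field, is needed before any Chabauty argument can start.

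Second, that descent step is never actually set up: you write $L=\QQ(\sqrt{\phantom{x}})$ with the field left blank, and in any case a quadratic field cannot do the job here. The relevant structure is the $2$-descent on $E:y^2=x^3+48$, and since $x^3+48$ (and likewise $x^3+2304$) is irreducible over $\QQ$, the descent necessarily takes place over the cubic field $K=\QQ(a)$ with $a^3+48=0$, not over a quadratic field. This is what the paper does: using the representatives $\{0,(1,7)\}$ of $E(\QQ)/2E(\QQ)$ and \cite[Lemma 1.1(a)]{FlynnWetherell99}, the square of the $X$-coordinate of any point of $C(\QQ)$ is the $x$-coordinate of a $K$-point on one of the two twists $E_1:y^2=x(x^2+ax+a^2)$ and $E_2:y^2=(1-a)x(x^2+ax+a^2)$, and Bruin's method applies because $\rank E_i(K)<3=[K:\QQ]$. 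Your stated condition $\rank E(L)<[L:\QQ]$ is the correct hypothesis for elliptic-curve Chabauty, but over a quadratic $L$ it would force rank at most $1$ and, more importantly, neither bielliptic quotient produces covers over a quadratic field that capture the condition $X\in\QQ$; the second quotient is in fact not needed at all. Until the cubic field and the explicit twisted covers are in place, your steps three and four have no content, so as written the argument does not go through.
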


From $C(\QQ)$ it is easy to compute the solutions of \eqref{eq:main} for $n=4$.

\subsection{Case $n=5$:}
From Lemma \ref{lem:parametrization_for_p} we have that there exist coprime integers $u,v$ such that $b^3=f_2(u,v)=v(5u^4-30u^2v^2+9v^4)$. Thus we can conclude that there exist coprime $b_1,b_2$ such that
\begin{align*}
&\left\{
\begin{aligned}
v=&5^2\cdot b_1^3\\
5u^4-30u^2v^2+9v^4=&5\cdot b_2^3\\
\end{aligned}\right.
& & \text{or} & 
\left\{\begin{aligned}
v=&b_1^3\\
5u^4-30u^2v^2+9v^4=& b_2^3\\
\end{aligned}\right.
\end{align*}
For the first case we have that
\begin{equation}
(u^2+\sqth v^2)^2-2^2\cdot 3^2\cdot 5^7\cdot b_1^{12}=b_2^3.
\end{equation}
Then the point $(\frac{b_2}{5^2\cdot b_1^4},\frac{u^2+\sqth v^2}{5^3\cdot b_1^6})$ is a $\QQ(\sqth)$-point of the elliptic curve $E:Y^2=X^3+180$. However, using Magma we can prove that $E(\QQ(\sqth))$ is trivial which is a contradiction.

For the second case we have 
\begin{equation}\label{eq:W}
5u^4-30u^2b_1^6+9b_1^{12}=W_1^2-20u^4=b_2^3.
\end{equation}
where $W_1=3b_1^6 - 5u^2$. Firstly, we consider the case $(u,b_1)\equiv (1,1)\mod 2$. Then we understand that there exists odd $W_1^\prime$ such that $W_1=2W_1^\prime$. Thus, we have 
\begin{equation}
W_1^{\prime 2}-5u^2 = 2b_2^{\prime 3}
\end{equation}
where $b_2=2b_2^\prime$. Taking the last equation modulo $4$ we understand that $2\mid b_2^\prime$, thus $W_1^{\prime 2}-5u^2\equiv 0\mod 8$ which is a contradiction to the fact that both $W_1^\prime$ and $u$ are odd. 

Let assume now that one of the $b_1$ and $u$ is even\footnote{This case is the same like the second case of equation (12) in \cite{BennettChen12}.}. Then we deduce $W_1$ is coprime to $10$. Factoring \eqref{eq:W} over $\QQ(\sqrt{5})$, which has class number $1$, we have that there exist $m$ and $n$ both odd or even such that
\begin{equation}\label{eq:W_u_over_Q5}
W_1+2\sqrt{5}u^2 = \left(\frac{1+\sqrt{5}}{2}\right)^e \left(\frac{m+n\sqrt{5}}{2}\right)^3.
\end{equation}
where $e=0,1,2$. 

For the case $e=1$ and expanding \eqref{eq:W_u_over_Q5} we have that 
\begin{align*}
m^3+15m^2n+15mn^2+25n^3 =& 16W_1\\
m^3+3m^2n+15mn^2+5n^3 =& 32u^2
\end{align*}
Subtracting the last two equations we get $3m^2n+5n^3=4W_1-8u^2$. Because $m$ and $n$ are both odd or even we deduce $3m^2n+5n^3\equiv 0\mod 8$ while $4W_1-8u^2\equiv 4 \mod 8$ which is a contradiction.

For $e=2$ we have
\begin{align*}
3m^3+15m^2n+45mn^2+25n^3 =& 16W_1\\
m^3+9m^2n+15mn^2+15n^3 =& 32u^2
\end{align*}
From the last two equations we have $3m^2n + 5n^3 = 24u^2 - 4W_1$. As before we have that $3m^2n + 5n^3\equiv 0\mod 8$ while $24u^2 - 4W_1\equiv 4\mod 8$ which is a contradiction.

Finally, we have the case $e=0$. It holds
\begin{align}
m(m^2 + 15n^2) = & 8W_1=8(3b_1^6 - 5u^2)\label{eq:first_mn}\\
n(3m^2 + 5n^2) = & 16u^2\label{eq:second_mn}
\end{align}
From the last two equations we have
\begin{equation}\label{eq:mnb1}
48b_1^6 = (m + 5n)(2m^2 + 5mn + 5n^2)
\end{equation}
Because $\gcd(m,n)\mid 2$ and from \eqref{eq:second_mn} we have that $n=2^{e_1}3^{e_2}n_1^2$ for some integer $n_1\in\ZZ$ and $e_i\in\{0,1\}$. Moreover, if we consider \eqref{eq:second_mn} modulo $5$ we understand that $(e_1,e_2)=(1,0)$ or $(0,1)$. For the case, $(e_1,e_2)=(1,0)$ we have that $n=2n_1^2$. Because $m\equiv n\mod 2$ we have $m=2m_1$ and equations \eqref{eq:first_mn} and \eqref{eq:second_mn} become
\begin{align}
m_1(m_1^2 + 15n_1^4) = & 3b_1^6 - 5u^2\label{eq:first_m1n1}\\
n_1^2(3m_1^2 + 5n_1^4) = & 2u^2\label{eq:second_m1n1}
\end{align}
From \eqref{eq:first_m1n1} we conclude that $m_1$ is odd since one of $b_1,u$ is even. As long as $m_1$ is odd we also understand from \eqref{eq:first_m1n1} that $n_1$ is even. However, from \eqref{eq:second_m1n1} we have that $2v_2(n_1)=1 + 2v_2(u)$ which is a contradiction.

Let assume now that $(e_1,e_2)=(0,1)$ and so $n=3n_1^2$. From \eqref{eq:mnb1} we understand that $3\mid m$ which is a contradiction to the fact that $m,n$ are coprime away from $2$.

\subsection{Case $n=7$:}
Finally, let assume that $n=7$. From Lemma \ref{lem:parametrization_for_p} we have that there exist coprime integers $u,v$ such that $b^3=f_2(u,v)=v(7u^6 - 105u^4v^2 + 189u^2v^4 - 27v^6)$. Thus we can conclude that there exist coprime $b_1,b_2$ such that
\begin{align*}
&\left\{
\begin{aligned}
v=&7^2b_1^3\\
7u^6 - 105u^4v^2 + 189u^2v^4 - 27v^6=&7b_2^3\\
\end{aligned}\right.
& & \text{or} & 
\left\{\begin{aligned}
v= & b_1^3\\
7u^6 - 105u^4v^2 + 189u^2v^4 - 27v^6 = & b_2^3\\
\end{aligned}\right.
\end{align*}
We define $f=7u^6 - 105u^4v^2 + 189u^2v^4 - 27v^6$. From the theory of invariants of cubic binary forms (see \cite{Cremona99} or \cite{DahmenThesis}) we have that $28h^3 = g^2 + 27f^2$ where
\begin{align}
h & = 7u^4 - 18u^2v^2 + 27v^4\\
g & = 91u^6 - 189u^4v^2 - 567u^2v^4 + 729v^6.
\end{align}

Let $M=\QQ(\sqth)$ and $\omega=\frac{1+\sqth}{2}$. Then it holds $28h^3 = (g + 3\sqth f)(g-3\sqth f)$.

\begin{lem}\label{lem:S_primitive_n7_c1}
Let $S_M=\{\fp\subset\mathcal O_M:\fp|2,3,7\}$. The triple $(g,f,h)$ is $S_M$-primitive and there exist $z_1,z_2\in M$ and $d_1,d_2\in M(S_M,3)$ with $d_1d_2/28\in M^{*3}$ such that
\begin{align}
g + 3\sqth f &= d_1z_1^3\\
g - 3\sqth f &= d_2z_2^3
\end{align}
\end{lem}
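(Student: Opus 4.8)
The plan is to reduce the statement to the arithmetic of the underlying binary forms and then to run a standard descent over $M=\QQ(\sqth)$. First I would substitute $s=u^2$, $t=v^2$; since $(u,v)=1$ we have $\gcd(s,t)=1$, and $f,g,h$ become the values at $(s,t)$ of the cubic $F(s,t)=7s^3-105s^2t+189st^2-27t^3$, its Hessian $H(s,t)=7s^2-18st+27t^2$, and its Jacobian cubic $G(s,t)$, which satisfy the syzygy $28H^3=G^2+27F^2$ recorded just before the statement. Note also that $(g+3\sqth f)(g-3\sqth f)=g^2+27f^2=28h^3$, since $(3\sqth)^2=-27$.

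\textbf{$S_M$-primitivity.} To prove that $(g,f,h)$ is $S_M$-primitive I would argue by contradiction: suppose a prime $\fp\notin S_M$ (so $\fp\nmid 42$) divides $f$, $g$ and $h$. Because $\gcd(s,t)=1$, any common prime factor of two of the values $F(s,t),H(s,t),G(s,t)$ must divide the corresponding resultant in the variables $s,t$. I would compute $\Res_{s,t}(F,H)$ (equivalently invoke that the discriminant of the cubic $F$ is supported on $2,3,7$) and check that it is of the form $\pm 2^a3^b7^c$. Hence any such $\fp$ lies above $2,3$ or $7$, i.e.\ $\fp\in S_M$, a contradiction; this establishes the primitivity of $(g,f,h)$.

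\textbf{Coprimality of the factors and cube valuations.} Recall that $M$ has class number one, that $2$ is inert, $3$ ramifies and $7$ splits, so $28$ is an $S_M$-number and lies in $M(S_M,3)$. From $28h^3=(g+3\sqth f)(g-3\sqth f)$ I would show the two factors are coprime away from $S_M$: if $\fp\notin S_M$ divided both, then $\fp\mid 2g$ and $\fp\mid 6\sqth f$, whence $\fp\mid g$ and $\fp\mid f$ (as $\fp\nmid 6$ and $\fp\nmid\sqth$), and then $\fp\mid 28h^3$ forces $\fp\mid h$, contradicting primitivity. Therefore for every $\fp\notin S_M$ at most one factor is divisible by $\fp$, and the relation $v_\fp(g+3\sqth f)+v_\fp(g-3\sqth f)=3v_\fp(h)$ gives $v_\fp(g\pm 3\sqth f)\equiv 0\pmod 3$ for all $\fp\notin S_M$.

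\textbf{Conclusion.} Using unique factorization in $\mathcal O_M$ (class number one), the ideal $(g+3\sqth f)$ equals an $S_M$-supported ideal times a perfect cube $(z_1)^3$; since the $S_M$-part and the units are principal (the units, being roots of unity, have trivial valuation everywhere and so lie in $M(S_M,3)$), absorbing them into a single $d_1\in M(S_M,3)$ yields $g+3\sqth f=d_1z_1^3$, and symmetrically $g-3\sqth f=d_2z_2^3$ with $d_2\in M(S_M,3)$. Multiplying, $d_1d_2(z_1z_2)^3=28h^3$, so $d_1d_2/28=(h/(z_1z_2))^3\in M^{*3}$, as required. The main obstacle is the primitivity step: one must pin down exactly which primes can divide $\gcd(f,g,h)$, i.e.\ verify that the relevant resultant (or the discriminant of $F$) involves only $2,3,7$, so that the descent set $S_M$ is the correct one.
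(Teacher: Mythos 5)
Your proposal is correct, but it departs from the paper's proof at both steps, so a comparison is worth recording. For the primitivity claim the paper stays with the sextic covariants in the original variables and computes the two resultants $\Res(f,g;u)=2^{42}\cdot 3^{18}\cdot 7^6\cdot v^{36}$ and $\Res(f,g;v)=2^{42}\cdot 3^{18}\cdot 7^6\cdot u^{36}$: a prime $p\neq 2,3,7$ dividing $f$ and $g$ would divide both, contradicting $(u,v)=1$. You instead pass to the cubic $F$ and its Hessian $H$ in $s=u^2$, $t=v^2$ and appeal to $\Res(F,H)$, equivalently $\mathrm{disc}(F)$; your promissory computation does check out ($\mathrm{disc}(F)=2^{12}\cdot 3^6\cdot 7^2$, and a common projective root of $\bar F$ and $\bar H$ modulo $p$ with $p\nmid 42$ is necessarily a multiple root of $\bar F$, the only mild point being that $H$ is $1/1008$ times the true Hessian, harmless since $1008=2^4\cdot 3^2\cdot 7$), so both routes are legitimate resultant arguments, just applied to different pairs of covariants. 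For the factorization itself the paper does no work at all: it cites Bruin's Lemma 3.1, which says precisely that $S_M$-primitivity of $(g,f,h)$ together with $g^2+27f^2=28h^3$ produces $d_1,d_2\in M(S_M,3)$ and $z_1,z_2$ with $d_1d_2/28\in M^{*3}$, whereas you reprove this special case by hand; your argument is sound, since the factors $g\pm 3\sqth f$ are coprime outside $S_M$ (the $2g$, $6\sqth f$ step, using $\fp\nmid \sqth$ for $\fp\notin S_M$), their valuations outside $S_M$ are then divisible by $3$, and because $\QQ(\sqth)$ has class number one with unit group the sixth roots of unity (all of trivial valuation, hence in $M(S_M,3)$), the $S_M$-supported part and the unit can be absorbed into $d_1,d_2\in M(S_M,3)$, after which $d_1d_2/28=\bigl(h/(z_1z_2)\bigr)^3\in M^{*3}$. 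What the paper's route buys is brevity — one citation plus a two-line machine computation; what yours buys is self-containedness, making visible why Bruin's lemma holds here, at the cost of invoking class-number-one and unit-group facts that the general lemma packages away.
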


\begin{proof}
Because of \cite[Lemma 3.1]{Bruin03} it is enough to prove that $(g,f,h)$ is $S_M$-primitive. Because $g,f,h\in\QQ$ it is enough to prove that $p\nmid\gcd(g,f,h)$ for $p\neq 2,3,7$. 

Let assume that there exists a prime $p$ that divides $f$, $g$ and $h$. It holds
\begin{align*}
\Res(f,g;u) = & 2^{42}\cdot 3^{18}\cdot 7^6\cdot v^{36}\\
\Res(f,g;v) = & 2^{42}\cdot 3^{18}\cdot 7^6\cdot u^{36}
\end{align*}
Because $p$ has to divide both $\Res(f,g;u)$ and $\Res(f,g;v)$, $p\neq 2,3,7$ and $(u,v)=1$ we have the result.
\end{proof}

Because $f=b_2^3$, $7b_2^3$ and from Lemma \ref{lem:S_primitive_n7_c1} we have that 
\begin{align*}
g + 3a\sqth b_2^3 &= d_1z_1^3\\
g - 3a\sqth b_2^3 &= d_2z_2^3
\end{align*}
where $a=1,7$. Subtracting the above two equation we have the following

\begin{prop}
With the notation as above we have that $(z_1,z_2,b_2)$ corresponds to a point on the cubic form
\begin{equation}\label{eq:elliptc_curve_n7_c1}
C:6a\sqth X_3^3 = d1X_1^3 - d_2X_2^3.
\end{equation}
where $a=7,1$ and $(X_1,X_2,X_3)\in\mathbb P^2(M)$.
\end{prop}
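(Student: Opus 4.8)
The plan is to obtain the cubic form $C$ directly by subtracting the two factorization equations that immediately precede the statement. Recall that, after substituting $f = ab_2^3$ (with $a=7$ in the first case and $a=1$ in the second) into the relations supplied by Lemma~\ref{lem:S_primitive_n7_c1}, one has
\begin{align*}
g + 3a\sqth b_2^3 &= d_1 z_1^3,\\
g - 3a\sqth b_2^3 &= d_2 z_2^3.
\end{align*}
First I would subtract the second equation from the first. This eliminates the common term $g$ and produces
\begin{equation*}
6a\sqth b_2^3 = d_1 z_1^3 - d_2 z_2^3,
\end{equation*}
which is exactly the defining equation of $C$ under the identification $(X_1, X_2, X_3) = (z_1, z_2, b_2)$.

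Next I would verify that the triple $(z_1, z_2, b_2)$ defines a legitimate point of $\mathbb{P}^2(M)$, i.e.\ that it is not the zero vector. Since $d_1 z_1^3$ and $d_2 z_2^3$ are precisely the two factors of $28h^3$ appearing in the factorization $28h^3 = (g+3\sqth f)(g-3\sqth f)$, and since $h = 7u^4 - 18u^2v^2 + 27v^4$ is nonzero for coprime $u, v$ (the quadratic $7t^2 - 18t + 27$ in $t=u^2/v^2$ has negative discriminant and positive leading coefficient, so $h>0$ throughout), neither factor vanishes; hence $z_1, z_2 \neq 0$ and the projective point $(z_1 : z_2 : b_2)$ is well-defined and satisfies the equation of $C$ by construction.

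I do not expect any substantive obstacle in this step: the proposition is an immediate algebraic consequence of the two equations above, and all of the arithmetic content—establishing the $S_M$-primitivity of $(g,f,h)$ and the existence of $z_1, z_2 \in M$ and $d_1, d_2 \in M(S_M,3)$ with $d_1 d_2/28 \in M^{*3}$—was already carried out in Lemma~\ref{lem:S_primitive_n7_c1}. The present statement merely records the Diophantine equation that this descent produces, so that one may subsequently analyze the $M$-rational points of the resulting genus-one curves for each admissible pair $(d_1, d_2)$.
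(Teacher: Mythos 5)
Your proposal is correct and matches the paper's argument, which likewise obtains the cubic form simply by subtracting the two equations $g \pm 3a\sqth b_2^3 = d_iz_i^3$ to eliminate $g$. Your additional check that $(z_1:z_2:b_2)$ is a genuine projective point (via positivity of $h$) is a harmless refinement the paper leaves implicit.
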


Using Magma we can find a degree $9$ birational map $\phi_C$ defined over $M$ from $C$ to the Jacobian $E_C$ of $C$ which is an elliptic curve defined over $M$. Again using Magma we prove that $E_C$ has zero rank for any choice of $d_1$, $d_2$ and $a$. 

Let $P=(a_1,a_2,a_3)\in\mathbb P^2(M)$ be point on $C$ which lies in the preimage of $E_C(M)$ with $a_3=1,0$. Then there exists $\lambda\in M$ such that $z_1=\lambda a_1$, $z_2=\lambda a_2$ and $b_2=\lambda a_3$. For the case $a_3=0$ we conclude that $f=b_2=0$ which means that $b=0$ in \eqref{eq:main}. Let assume that $a_3=1$, so $b_2=\lambda$. Because $b_2,g\in\ZZ$ we have that $g= (d_1a_1^3-3a\sqth)b_2^3\in\QQ$. But using Magma we prove that this never happens and so \eqref{eq:main} has no solutions for $n=7$ (see \textit{Exponent7.m}).

\section{Appendix}\label{sec:appendix}

In this section we prove Proposition \ref{prop:covering_chabauty} applying the ideas of Flynn and Wetherell \cite{FlynnWetherell99} and the elliptic curve Chabauty \cite{Bruin03}. 

We recall that $C: Y^2 = X^6 + 48$. We define
\begin{equation}
E : y^2 = x^3 + 48
\end{equation}
It holds $E(\QQ)=\ZZ$ and the generator is $(1,7)$. Let $K=\QQ(a)$ where $a^3 + 48=0$ and $\{0,(1,7)\}$ be a set of representatives of $E(\QQ)/2E(\QQ)$. According to \cite[Lemma 1.1(a)]{FlynnWetherell99} the square of the $X$-coordinate of a rational point of $C$ is the $x$-coordinate of one of the two elliptic curves,
\begin{align}
E_1 : &~y^2 = x(x^2 + ax + a^2)\\
E_2 : &~y^2 = (1-a)x(x^2 + ax + a^2)
\end{align}
For both curves have $\rank E_i(K)<3$, so we can apply elliptic curve Chabauty \cite{Bruin03} (see also \cite{BremnerTzanakis04}, \cite{FlynnWetherell99}) to compute $E_i(K)\cap E_i(\QQ)$. Writing a Magma script (see\footnote{In \textit{Exponent4.m} we make the change of variables $(x,y)=(X/(1-a),Y/(1-a))$ for $E_2$ to bring the curve in the standard Weierstrass form.} \textit{Exponent4.m}) we prove the following,

\begin{prop}
It holds,
\begin{align*}
E_1(K)\cap E_1(\QQ) = & \{\infty,(0,0)\}\\
E_2(K)\cap E_2(\QQ) = & \{\infty,(0,0),(1\pm 7)\}
\end{align*}
\end{prop}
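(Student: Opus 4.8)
The plan is to prove the proposition by the elliptic curve Chabauty method of Bruin \cite{Bruin03}, which applies here precisely because $K=\QQ(a)$ with $a^3+48=0$ has degree $3$ over $\QQ$ while, as recorded above, $\rank E_i(K)<3$ for both $i=1,2$. The set to be determined is $\{P\in E_i(K) : x(P)\in\QQ\}$, i.e. the $K$-rational points of $E_i$ whose $x$-coordinate — which on the cover $C\to E$ corresponds to $X^2$ with $X\in\QQ$ — lies in $\QQ$. Writing $\xi_i\colon E_i\to\mathbb P^1$ for the $x$-coordinate map, elliptic curve Chabauty computes $\xi_i^{-1}(\mathbb P^1(\QQ))\cap E_i(K)$ as soon as one exhibits a subgroup of finite index in $E_i(K)$ of rank strictly less than $[K:\QQ]=3$.

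First I would compute the Mordell--Weil group $E_i(K)$ over the cubic field $K$. A descent — either a two-isogeny descent or a full $2$-descent exploiting the factorization $x^3+48=(x-a)(x^2+ax+a^2)$ over $K$ — furnishes an upper bound for the rank, while a point search supplies explicit generators and a matching lower bound; saturating these generators at the small primes relevant to the Chabauty step yields a finite-index subgroup of $E_i(K)$ of full rank with index prime to the auxiliary prime. The main obstacle lies exactly here: the inequality $\rank E_i(K)<3$ must be established rigorously rather than merely asserted, including a provable saturation of the found generators, since the entire method collapses if either rank turns out to equal $3$.

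With generators in hand I would run the Chabauty step at a prime $\fp$ of $K$ of good reduction for $E_i$. Via the formal group and Coleman integration at $\fp$, the condition $\xi_i(P)\in\mathbb P^1(\QQ)$ cuts out a $\fp$-adic analytic locus on each residue disc; because $\rank E_i(K)<[K:\QQ]$, the $p$-adic closure of $E_i(K)$ fails to fill the space transverse to this locus, so only finitely many points survive, with an effective count per disc. One then chooses $\fp$ — or a small collection of primes, combined with a Mordell--Weil sieve or a covering collection should a single prime over-count — so that the resulting upper bound on $\#\bigl(\xi_i^{-1}(\mathbb P^1(\QQ))\cap E_i(K)\bigr)$ equals the number of points already exhibited.

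Finally I would verify that the listed points genuinely have rational $x$-coordinate and lie on the curves: $\infty$ and the two-torsion point $(0,0)$ lie on both $E_i$, and on $E_2$ the points $(1,\pm7)$ satisfy $(1-a)(1+a+a^2)=1-a^3=49$, giving $y=\pm7$, whereas $E_1$ carries no further point of rational $x$. Matching this explicit list against the Chabauty bound of the previous step yields the stated equalities. In practice steps two through four are executed by the Magma implementation of Bruin's algorithm in \textit{Exponent4.m}; the exposition above merely isolates where the rigor resides, namely in the provable rank bound over $K$ and in the sharpness of the $\fp$-adic count.
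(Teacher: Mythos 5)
Your proposal is correct and takes essentially the same route as the paper: the paper also establishes $\rank E_i(K)<3$ over the cubic field $K=\QQ(a)$ and then applies Bruin's elliptic curve Chabauty method, delegating the rank computation, the $\fp$-adic Chabauty step, and the matching of the point list to the Magma script \textit{Exponent4.m}. Your write-up simply spells out the internal stages (descent and saturation over $K$, Coleman-style bound per residue disc, sieve as fallback, and the explicit check that $(1,\pm 7)$ lies on $E_2$ via $(1-a)(1+a+a^2)=49$) that the paper leaves implicit in the computation.
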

Then we can easily prove that $C(\QQ)=\left\lbrace\infty^{\pm},(\pm 1,\pm 7)\right\rbrace$.

\section*{Acknowledgement}
The author would like to thank Professor John Cremona for providing access to the servers of the Number Theory Group of Warwick Mathematics Institute where all the computations took place.

\bibliographystyle{alpha}
\bibliography{on_the_generalized_Fermat_equation_a2_3b6_cn}

\begin{thebibliography}{BCDY15}

\bibitem[BC12]{BennettChen12}
Michael~A. Bennett and Imin Chen.
\newblock Multi-frey $\mathbb{Q}$-curves and the diophantine equation
  $a^2+b^6=c^n$.
\newblock {\em Algebra Number Theory}, 6(4):707--730, 2012.

\bibitem[BCDY15]{BennettChenDahmenYazdani15}
Michael~A. Bennett, Imin Chen, Sander~R. Dahmen, and Soroosh Yazdani.
\newblock Generalized {F}ermat equations: {A} miscellany.
\newblock {\em Int. J. Number Theory}, 11(01):1--28, 2015.

\bibitem[BCP97]{Magma}
Wieb Bosma, John Cannon, and Catherine Playoust.
\newblock The {M}agma algebra system. {I}. {T}he user language.
\newblock {\em J. Symbolic Comput.}, 24(3-4):235--265, 1997.
\newblock Computational algebra and number theory (London, 1993).

\bibitem[BEN10]{BennettEllenbergNg10}
Michael~A. Bennett, Jordan~S. Ellenberg, and Nathan~C. Ng.
\newblock The diophantine equation $a^4 + 2^\delta b^2 = c^n$.
\newblock {\em Int. J. Number Theory}, 06(02):311--338, 2010.

\bibitem[Bru03]{Bruin03}
Nils Bruin.
\newblock Chabauty methods using elliptic curves.
\newblock {\em J. Reine Angew. Math.}, 562:27--49, 2003.

\bibitem[BT04]{BremnerTzanakis04}
A.~Bremner and N.~Tzanakis.
\newblock Lucas sequences whose $12$th or $9$th term is a square.
\newblock {\em Journal of Number Theory}, 107(2):215 -- 227, 2004.

\bibitem[Car86]{Carayol86}
H.~Carayol.
\newblock Sur les repr\'{e}sentations $\ell$--adiques associ\'{e}es aux formes
  modulaires de {H}ilbert.
\newblock {\em Ann. Sci. \'{E}cole Norm. Sup.}, 19:409--468, 1986.

\bibitem[Che10]{Chen10}
I.~Chen.
\newblock On the equation $a^2 + b^{2p}=c^5$.
\newblock {\em Acta Arith.}, 143:345--375, 2010.

\bibitem[Cox89]{Cox89}
David~A. Cox.
\newblock {\em Primes of the form $x^2 + ny^2$}.
\newblock John Wiley \& Sons, 1989.

\bibitem[Cre97]{Cremona97}
John Cremona.
\newblock {\em Algorithms for {M}odular {E}lliptic {C}urves}.
\newblock Cambridge University Press, 2nd edition, 1997.

\bibitem[Cre99]{Cremona99}
J.~E. Cremona.
\newblock Reduction of {B}inary {C}ubic and {Q}uartic {F}orms.
\newblock {\em LMS Journal of Computation and Mathematics}, 2:62--92, 1999.

\bibitem[Dah08]{DahmenThesis}
Sander~R. Dahmen.
\newblock {\em Classical and modular methods applied to {D}iophantine
  equations}.
\newblock PhD thesis, Utrecht University, 2008.

\bibitem[DDT97]{DarmonDiamondTaylor97}
H.~Darmon, F.~Diamond, and R.~Taylor.
\newblock {\em Elliptic curves, modulad forms \& Fermat's Last Theorem (Hong
  Kong, 1993)}, chapter Fermat's Last Theorem, pages 2--140.
\newblock International Press, 1997.

\bibitem[DU09]{DieulefaitUrroz09}
Luis Dieulefait and J.~Jim\'{e}nez Urros.
\newblock Solving {F}ermat-type equations via modular $\mathbb{Q}$-curves over
  polyquadratic fields.
\newblock {\em J. reine angew. Math.}, 2009.

\bibitem[Ell04]{Ellenberg04}
Jordan Ellenberg.
\newblock Galois representations attached to $\mathbb{Q}$--curves and the
  generalized {F}ermat equation $a^4+b^2=c^p$.
\newblock {\em American Journal of Mathemarics}, 126(4):763--787, 2004.

\bibitem[ES01]{EllenbergSkinner01}
Jordan~S. Ellenberg and Chris Skinner.
\newblock On the modularity of $\mathbb{Q}$--curves.
\newblock {\em Duke Math. J.}, 109(1):97--122, 07 2001.

\bibitem[FW99]{FlynnWetherell99}
E.~Victor Flynn and Joseph~L. Wetherell.
\newblock Finding rational points on bielliptic genus $2$ curves.
\newblock {\em Manuscripta Mathematica}, 100(4):519--533, 1999.

\bibitem[Ken79]{Kenku79}
M.~A. Kenku.
\newblock The modular curve $x_0(39)$ and rational isogeny.
\newblock {\em Math. Proc. Camb. Phil. Soc.}, 85(1):21--23, 1979.

\bibitem[Kis09]{Kisin09}
Mark Kisin.
\newblock Modularity of $2$--adic {B}arsotti--{T}ate representations.
\newblock {\em Invent. Math.}, 178(3):587--634, 2009.

\bibitem[KW09a]{KhareWintenberger09a}
Chandrashekhar Khare and Jean-Pierre Wintenberger.
\newblock Serre's modularity conjecture ({I}).
\newblock {\em Invent. Math.}, 178(3):485--504, 2009.

\bibitem[KW09b]{KhareWintenberger09b}
Chandrashekhar Khare and Jean-Pierre Wintenberger.
\newblock Serre's modularity conjecture ({II}).
\newblock {\em Invent. Math.}, 178(3):505--586, 2009.

\bibitem[Mil72]{Milne72}
J.~S. Milne.
\newblock On the arithmetic of abelian varieties.
\newblock {\em Invent. Math.}, 17(3):177--190, 1972.

\bibitem[Pap93]{Papadopoulos93}
I.~Papadopoulos.
\newblock Neron classification of elliptic curves where the residual
  characteristics equal 2 or 3.
\newblock {\em Journal of Number Theory}, 44(2):119 -- 152, 1993.

\bibitem[Que00]{Quer00}
Jordi Quer.
\newblock $\mathbb{Q}$--curves and abelian varieties of $\text{GL}_2$--type.
\newblock {\em Proc. London Math. Soc.}, 81(2):285--317, 2000.

\bibitem[Rib90]{Ribet90}
K.~A. Ribet.
\newblock On modular representations of $\mathrm{Gal}(\overline{Q}/{Q})$
  arising from modular forms.
\newblock {\em Invent. Math.}, 100(1):431--476, 1990.

\bibitem[Rib04]{Ribet04}
Kenneth~A. Ribet.
\newblock {\em Abelian {V}arieties over $\mathbb Q$ and {M}odular {F}orms},
  pages 241--261.
\newblock Birkh{\"a}user Basel, Basel, 2004.

\bibitem[Ser87]{Serre87}
Jean-Pierre Serre.
\newblock Sur les repr\'{e}sentations modulaires de degr\'{e} $2$ de
  $\mathrm{Gal}(\overline{Q}/{Q})$.
\newblock {\em Duke Math. J.}, 54(1):179--230, 1987.

\end{thebibliography}

\end{document}